\numberwithin{equation}{section}
\theoremstyle{plain}
\newtheorem{theorem}{Theorem}[section]
\newtheorem{thm}[theorem]{Theorem}
\newtheorem{prop}[theorem]{Proposition}
\newtheorem{lemma}[theorem]{Lemma}
\newtheorem{lem}[theorem]{Lemma}
\theoremstyle{definition}
\newtheorem{definition}[theorem]{Definition}
\newtheorem{dfn}[theorem]{Definition}
\newcommand{\boehmcomplex}{B}
\newcommand{\hatC}{C}
\theoremstyle{remark}
\newtheorem{rem}{Remark}
\newcommand{\ahha}{{\scriptscriptstyle{A}}}
\newcommand{\emme}{{\scriptscriptstyle{M}}}
\newcommand{\gd}{\delta} 
\newcommand{\gD}{\Delta}
\newcommand{\Deltaell}{\Delta} 
\newcommand{\gl}{\lambda} 
\newcommand{\gL}{\Lambda}
\newcommand{\gs}{\sigma}
\newcommand{\eps}{\epsilon}
\newcommand{\Tor}{{\rm Tor}}
\newcommand{\Ext}{{\rm Ext}}
\newcommand{\Cotor}{\operatorname{Cotor}}
\newcommand{\id}{{\rm id}}
\newcommand{\sll}{s}
\newcommand{\tl}{t}
\newcommand{\due}[3]{{}_{{#2 \!}} {#1}_{{\! #3}}\,}    
\newcommand{\qttr}[5]{{}^{{#2 \!}}_{{#4 \!}} {#1}^{#3}_{{\! #5}}}    
\newcommand{\pl}{\partial}
\newcommand{\rmref}[1]{{\rm (}\ref{#1}{\rm )}}
\newcommand{{\Hl}}{{H^{\ell}}} 
\newcommand{{\mHop}}{{m_{H^{\rm op}}}} 
\newcommand{{\Hop}}{{H^{\rm op}}} 
\newcommand{{\mUop}}{{m_{U^{\rm op}}}} 
\newcommand{{\Uop}}{{U^{\rm op}}}
\newcommand{{\mVop}}{{m_{V^{\rm op}}}} 
\newcommand{{\Vop}}{{V^{\rm op}}}  
\newcommand{{\Ae}}{{A^{\rm e}}}
\newcommand{{\Be}}{{B^{\rm e}}}
\newcommand{{\Aop}}{{A^{\rm op}}}
\newcommand{{\Aope}}{({A^{\rm op}})^{\rm e}}
\newcommand{{\Aopl}}{{A^{\rm op}_\pl}}
\newcommand{{\Bop}}{{B^{\rm op}}}
\newcommand{{\Bope}}{({B^{\rm op}})^{\rm e}}
\newcommand{{\Bpl}}{{B_\pl}}
\newcommand{{\op}}{{{\rm op}}}
\newcommand{{\coop}}{{{\rm coop}}}
\newcommand{{\sop}}{{*^{\rm op}}}
\newcommand{\kmod}{k\mbox{-}\mathbf{Mod}}                     %
\newcommand{\amod}{A\mbox{-}\mathbf{Mod}}                     %
\newcommand{\amoda}{A^{\rm e}\mbox{-}\mathbf{Mod}}                  %
\newcommand{\umod}{U\mbox{-}\mathbf{Mod}}                     
\newcommand{\modu}{U^\mathrm{op}\mbox{-}\mathbf{Mod}}         %
\newcommand{\ucomod}{U\mbox{-}\mathbf{Comod}}         %
\newcommand{\lact}{{\,\raise1pt\hbox{$\scriptscriptstyle{\rhd}$} \, }}                  %
\newcommand{\ract}{{\,\raise1pt\hbox{$\scriptscriptstyle{\lhd}$} \, }}                  
\newcommand{\blact}{{\,\raise1pt\hbox{$\scriptscriptstyle{\blacktriangleright}$} \, }}  %
\newcommand{\bract}{{\,\raise1pt\hbox{$\scriptscriptstyle{\blacktriangleleft}$} \, }}   %
\newcommand{{\gog}}{{G \rightrightarrows G_0}}
\newcommand{{\rra}}{\rightrightarrows}
\newcommand{{\lra}}{\ \longrightarrow \ }
\newcommand{{\lla}}{\ \longleftarrow \ }
\newcommand{{\lma}}{\ \longmapsto \ }
\def\kasten#1{\mathop{\mkern0.5\thinmuskip
\vbox{\hrule
      \hbox{\vrule
            \hskip#1
            \vrule height#1 width 0pt
            \vrule}%
      \hrule}%
\mkern0.5\thinmuskip}}
\newcommand{\bx}{{\kasten{6pt}}}
\newcommand{{\bull}}{{\scriptscriptstyle{\bullet}}}
\newcommand{{\qqquad}}{{\quad\quad\quad}}
\newcommand{\Aopp}{{\scriptscriptstyle{\Aop}}}
\newcommand{\Aee}{{\scriptscriptstyle{\Ae}}}
\begin{document}

\title{Cyclic structures in algebraic (co)homology theories} 

\author{Niels Kowalzig} 
\author{Ulrich Kr\"ahmer} 

\address{N.K.: Institut des Hautes \'Etudes Scientifiques, 
Le Bois-Marie,
35, route de Chartres, 
91440 Bures-sur-Yvette, 
France}

\email{kowalzig@ihes.fr}

\address{U.K.: University of Glasgow,
School of Mathematics \& Statistics, University 
Gardens, Glasgow G12 8QW, Scotland}

\email{Ulrich.Kraehmer@glasgow.ac.uk}

\begin{abstract}
This note discusses the
cyclic cohomology of a
left Hopf algebroid ($\times_A$-Hopf
algebra) with coefficients in 
a right module-left comodule, defined 
using a   
straightforward generalisation of the original
operators given by Connes and
Moscovici for Hopf algebras. 
Lie-Rinehart homology is a special case of this theory.
A generalisation of cyclic duality 
that makes sense for arbitrary para-cyclic objects yields a dual homology theory. 
The twisted cyclic homology of an associative algebra provides an example of this dual theory 
that uses coefficients that are not necessarily stable anti Yetter-Drinfel'd~modules.
\end{abstract}
\maketitle


\section{Introduction}
\subsection{Topic} 
A left Hopf algebroid ($\times_A$-Hopf
algebra) $U$  is roughly speaking  
a Hopf algebra whose ground ring is not
a field $k$ but a possibly
noncommutative $k$-algebra $A$ 
\cite{Boe:HA,Schau:DADOQGHA}. The concept 
provides in particular a natural
framework for unifying and extending
classical constructions in homological
algebra. Group, Lie
algebra, Hochschild, and Poisson 
homology are all special cases of 
Hopf algebroid homology 
$$
		  H_\bull(U,M):=\mathrm{Tor}^U_\bull(M,A),\quad
		  M \in \modu,
$$
since the
rings $U$ over which these
theories can be expressed as
derived functors are all left Hopf
algebroids. This allows one for example to study cup and
cap products as well as the phenomenon
of Poincar\'e duality in a uniform way
\cite{KowKra:DAPIACT}.

Similarly, we describe here how 
the additional structure of a left
$U$-comodule on $M$ 
induces a para-cyclic structure 
(cf.~Section~\ref{defilambda}) on the canonical
chain complex $C_\bull(U,M)$
that computes $H_\bull(U,M)$
assuming $U$ is flat over
$A$. This
defines in particular
an analogue of
the Connes-Rinehart-Tsygan differential
$$
		  B : H_\bull(U,M)
		  \rightarrow H_{\bull-1}(U,M).
$$ 
Assuming a suitable compatibility
between the $U$-action and the
$U$-coaction (namely that $M$ is a
stable anti Yetter-Drinfel'd module), the
para-cyclic $k$-module $C_\bull(U,M)$ is
in fact cyclic and hence turned by $B$ into a
mixed complex. However, we will also
discuss concrete examples which
demonstrate the necessity to go beyond this
setting.  
 
\subsection{Background} 
The operator $B$ has been defined
by Rinehart on the
Hochschild homology of a commutative
$k$-algebra $A$ (with $M=A$ and $U=\Ae=A
\otimes_k A^\op$) in order to define the
De Rham cohomology of an arbitrary
affine scheme over $k$ \cite{Rin:DFOGCA}. 
Connes and
Tsygan independently rediscovered it
around 1980 as a central ingredient in
their definition of cyclic homology 
which 
extends Rinehart's
theory to noncommutative algebras 
\cite{Con:CCEFE,FeiTsy:AKT}.

Connes and
Moscovici, and Crainic initiated the study of
the case of a Hopf
algebra $U$ over $A=k$ with
one-dimensional coefficients $M$
\cite{ConMos:HACCATTIT,Cra:CCOHA}.
The class of admissible coefficient modules $M$
was subsequently enlarged to stable anti
Yetter-Drinfel'd modules  
\cite{HajKhaRanSom:SAYDM}, and Kaygun
finally obtained the construction for 
Hopf algebras with arbitrary modules-comodules as coefficients \cite{Kay:BCHWC, Kay:TUHCT}.

Noncommutative base rings appeared
for the
first time in the particular example of
the ``extended'' Hopf algebra governing
the transversal geometry of foliations
\cite{ConMos:DCCAHASITG}. The general
theory has then been further developed in 
\cite{BoeSte:ACATCD, BoeSte:CCOBAAVC,
HasRan:EHGEAHCC,KhaRan:PHAATCC, Kow:HAATCT,
KowPos:TCTOHA, Mas:NCGTMC}. 

\subsection{Results}
Our first aim here is to give explicit
formulas for the most
straightforward generalisation
of the original operators defined by Connes and
Moscovici in \cite{ConMos:DCCAHASITG}
towards Hopf algebroids
and completely general coefficients. We
copy the result here, see the main text
for the details and in particular for
the notation used:
\begin{thm}\label{main}
Let $U$ be a left Hopf algebroid over
a $k$-algebra $A$, and $M$ be a right $U$-module and left
$U$-comodule with compatible induced left $A$-module structures. Then
$
		  C^\bull(U,M):=U^{\otimes_A
		  \bull} \otimes_A M
$ 
carries a canonical
para-cocyclic $k$-module structure with
codegeneracies and cofaces
$$
\begin{array}{rll}
\gd_i(z \otimes_\ahha m) \!\!\!\!&= \left\{\!\!\!
\begin{array}{l} 1 
\otimes_\ahha u^1 \otimes_\ahha \cdots
 \otimes_\ahha u^n \otimes_\ahha m  
\\ 
u^1 \otimes_\ahha \cdots \otimes_\ahha \Deltaell (u^i) \otimes_\ahha \cdots
 \otimes_\ahha u^n \otimes_\ahha m
\\
u^1 \otimes_\ahha \cdots \otimes_\ahha u^n \otimes_\ahha m_{(-1)} \otimes_\ahha m_{(0)} 
\end{array}\right. \!\!\!\!\!\!\!\! 
& \!\! \begin{array}{l} \mbox{if} \ i=0, \\ \mbox{if} \
  1 \leq i \leq n, \\ \mbox{if} \ i = n + 1,  \end{array} \\
\gd_j(m) \!\!\!\! &= \left\{ \!\!\!
\begin{array}{l}
		  1 
		  \otimes_\ahha m  \quad
\\
m_{(-1)} \otimes_\ahha m_{(0)}  \quad 
\end{array}\right. & \!\!
\begin{array}{l} \mbox{if} \ j=0, \\ \mbox{if} \
  j = 1 ,  \end{array} \\
\gs_i(z \otimes_\ahha m) \!\!\!\! 
&= u^1 \otimes_\ahha \cdots \otimes_\ahha
\eps (u^{i+1}) \otimes_\ahha \cdots
\otimes_\ahha u^n \otimes_\ahha m &
\!\!
\begin{array}{l} 0 \leq i \leq n-1,\end{array} 
\end{array}
$$
and cocyclic operator
$$
\tau_n(z \otimes_\ahha m) = u^1_{-(1)}u^2 \otimes_\ahha \cdots
 \otimes_\ahha
 u^1_{-(n-1)}u^n\otimes_\ahha
 u^1_{-(n)}m_{(-1)} \otimes_\ahha
 m_{(0)}u^1_+, 
$$
where we abbreviate $z:=u^1 \otimes_\ahha
\cdots \otimes_\ahha u^n$. 
\end{thm}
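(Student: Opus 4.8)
The plan is to verify directly that the listed operators satisfy the defining relations of a para-cocyclic $k$-module: all the cosimplicial identities among the cofaces $\gd_i$ and the codegeneracies $\gs_i$, together with the compatibility of the operators $\tau_n$ with the $\gd_i$ and $\gs_i$ --- the interior relations $\tau\gd_i=\gd_{i-1}\tau$ and $\tau\gs_i=\gs_{i-1}\tau$, the relation $\tau\gd_0=\gd_{n+1}$ with the topmost coface, and the quadratic relation expressing $\tau\gs_0$ through the topmost codegeneracy and $\tau^2$ --- but \emph{not} the extra relation $\tau_n^{n+1}=\id$. Logically prior to all of this, one must check that each operator is well defined on the $A$-balanced tensor product $U^{\otimes_\ahha n}\otimes_\ahha M$. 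For the $\gd_i$ with $0\le i\le n$ and for the $\gs_i$ this is immediate from $A$-bilinearity of $\Deltaell$ and $\eps$; for the topmost coface it uses $A$-linearity of the coaction $m\mapsto m_{(-1)}\otimes_\ahha m_{(0)}$, which is ensured by the compatibility of the induced left $A$-module structures assumed on $M$; and for $\tau_n$ it is the first real point, resting on the defining properties of the translation map $u\mapsto u_+\otimes_{\Aop} u_-$ of the left Hopf algebroid (that $u_+\otimes_{\Aop} u_-$ lies in the relevant Takeuchi product and intertwines the two $A$-actions on $U$ correctly) together with $A$-linearity of the $U$-action and $U$-coaction on $M$.

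Once well-definedness is secured, the cosimplicial identities are the routine part. Up to the coaction sitting in the last slot, the cosimplicial $k$-module underlying $C^\bull(U,M)$ is the standard one attached to the $A$-coring $(U,\Deltaell,\eps)$ --- the same one underlying the Connes-Moscovici complex when $A=k$ --- so its coface/codegeneracy relations follow from coassociativity and counitality of $\Deltaell$ exactly as in that case; the only additional bookkeeping concerns the slot into which the coaction enters, and this is controlled by coassociativity of the coaction and the counit axiom $\eps(m_{(-1)})m_{(0)}=m$. It is worth noting that, in contrast with Connes-Moscovici, no anti Yetter-Drinfel'd condition on $M$ is used here: the mild compatibility of the induced $A$-module structures suffices, exactly as in Kaygun's treatment of Hopf algebras (the role of that condition only surfaces later, when one asks for $\tau_n^{n+1}=\id$).

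The heart of the proof is the interaction of $\tau_n$ with the faces and degeneracies. The interior relations reduce to the observation that comultiplying (resp.\ collapsing) one of the factors $u^2,\dots,u^n$ commutes with the cyclic rotation, since $\tau_n$ only ever redistributes the \emph{single} element $u^1_-$ over the remaining slots via its own iterated comultiplication; coassociativity of $\Deltaell$ (and of the coaction, for the slot carrying $m_{(-1)}$), plus the fact that $\Deltaell$ is an algebra map and the left-bialgebroid counit axiom, then make the two sides agree after a bounded, mechanical computation. The identity $\tau\gd_0=\gd_{n+1}$ is even quicker, being an immediate consequence of $1_+\otimes_{\Aop}1_-=1\otimes_{\Aop}1$ and $\Deltaell(1)=1\otimes_\ahha 1$. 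The genuinely delicate relation is the quadratic one involving $\gs_0$, which collapses the distinguished factor $u^1$: one has to unwind two successive rotations and reassemble them, which forces the use of the anti-multiplicativity $(uv)_+\otimes_{\Aop}(uv)_-=u_+v_+\otimes_{\Aop}v_-u_-$ of the translation map, together with its higher coassociativity and counit identities and the Hopf-Galois relation $u_{+(1)}\otimes_\ahha u_{+(2)}u_-=u\otimes_\ahha 1$, all while keeping careful track of which of the two $A$-actions each tensor factor is balanced against. I expect this quadratic identity --- and, upstream of everything, the well-definedness of $\tau_n$ over the $A$-balanced tensor product --- to be the main obstacle: there is no single trick, only the disciplined deployment of the full list of left-Hopf-algebroid identities and of the module-comodule compatibilities in the right order. (A more conceptual route, realizing $C^\bull(U,M)$ as the para-cocyclic object attached to a suitable comonad, seems possible but does not obviously shorten the verification.)
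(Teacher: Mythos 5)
Your plan is sound and, carried out with the care you describe, would prove the theorem; but it follows a genuinely different route from the paper's. You propose a head-on verification of every para-cocyclic relation on the explicit formulas, whereas the paper obtains almost all of them by transport of structure: it first builds the auxiliary para-cocyclic module $\boehmcomplex^\bull(U,M)=U^{\otimes_A \bull+1}\otimes_{\Ae}M$ of Lemma \ref{nepaseffacer}, whose cocyclic operator $\tau'_n$ is a bare rotation twisted by the coaction, so that all relations there are elementary; it then passes to the quotient $U^{\otimes_A \bull+1}\otimes_{\Uop}M$ (Proposition \ref{fratellanza}, which is where the translation-map identities and the aYD condition enter) and conjugates by the Hopf--Galois isomorphism $\phi$ of \rmref{corelli} to arrive at exactly your explicit operators. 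To remove the aYD hypothesis and retain only the compatibility of the induced left $A$-actions, the paper then observes (Theorem \ref{jetztjetitlos}) that the only relation whose verification could force one to rewrite $\gD_\emme(mu)$ is $\tau_n\circ\gs_0=\gs_n\circ\tau_{n+1}^2$, and checks precisely that one by hand --- the same quadratic relation you single out as the heart of the matter, and with the same ingredients (\rmref{Sch2}, \rmref{Sch47}, \rmref{Sch3}, \rmref{Sch5}). What the paper's detour buys is that the remaining relations never have to be confronted with the Sweedler components $u^1_\pm$ at all; what your direct route buys is self-containedness, at the price of redoing for each relation the bookkeeping that $\phi$ performs once and for all. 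One small caveat on your sketch: the interior coface relation for $i=1$, namely $\tau_n\circ\gd_1=\gd_0\circ\tau_{n-1}$, comultiplies the distinguished factor $u^1$ itself, so it too requires \rmref{Sch2} and \rmref{Sch38} rather than merely the observation that comultiplying one of $u^2,\dots,u^n$ commutes with the rotation; this does not affect the viability of the plan, but it means the translation-map identities intervene in more places than the two you flag.
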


The proof follows closely the 
literature cited above,
which contains similar constructions
of a large variety of 
para-cyclic and para-cocyclic modules
assigned to Hopf algebroids.
Many of these are
related by various dualities 
($k$-linear duals, $\Tor$
vs.~$\Ext$ vs.~$\Cotor$,
dual Hopf algebroids when applicable,
and cyclic
duality). However, there seems no
reference for the exact setting we
consider here. Also, Kaygun's 
pivotal observation mentioned above
seems a little lost in the references
working over noncommutative base
algebras. Last but not
least, the above 
answers also the question of how
the Hopf-cyclic (co)homologies in
\cite{Kow:HAATCT, KowPos:TCTOHA}  
can be extended to general coefficients.

Secondly, it has been pointed out 
by several authors that the standard operation of
cyclic duality which canonically identifies 
cyclic and cocyclic objects 
does not lift 
to para-(co)cyclic objects, see e.g.~\cite{BoeSte:ACATCD}. 
However, we show in
Section~\ref{wirdschon} that a different
choice of
anti-autoequivalence of the cyclic
category leads to a form 
of cyclic duality that does lift. This
allows us to construct in full
generality a cyclic dual  
$(C_\bull(U,M),d_\bull,s_\bull,t_\bull)$ 
from the para-cocyclic module from
Theorem~\ref{main}. 
We provide an isomorphism of this
with the para-cyclic module 
$M \otimes_\Aopp  
		  (\due U \blact \ract)^{\otimes_\Aopp  \bull}$ 
whose structure maps are given by
$$
\!\!\!
\begin{array}{rcll}
d_i(m \otimes_\Aopp x)  &\!\!\!\!\! =& \!\!\!\!\!
\left\{ \!\!\!
\begin{array}{l}
m \otimes_\Aopp u^1  \otimes_\Aopp   \cdots   \otimes_\Aopp   \eps(u^n) \blact u^{n-1}
\\
m \otimes_\Aopp \cdots \otimes_\Aopp  u^{n-i} u^{n-i+1}
 \otimes_\Aopp  \cdots 
\\
mu^1 \otimes_\Aopp u^2  \otimes_\Aopp   \cdots    \otimes_\Aopp  
u^n 
\end{array}\right.  & \!\!\!\!\!\!\!\!\!\!\!\! \,  \begin{array}{l} \mbox{if} \ i \!=\! 0, \\ \mbox{if} \ 1
\!  \leq \! i \!\leq\! n-1, \\ \mbox{if} \ i \! = \! n, \end{array} \\
s_i(m \otimes_\Aopp x) &\!\!\!\!\! =&\!\!\!\!\!  \left\{ \!\!\!
\begin{array}{l} m  \otimes_\Aopp   u^1  
\otimes_\Aopp   \cdots   \otimes_\Aopp
 u^n  \otimes_\Aopp 
		  1
\\
m \otimes_\Aopp \cdots \otimes_\Aopp   u^{n-i} 
\otimes_\Aopp   1  
\otimes_\Aopp   u^{n-i+1}  \otimes_\Aopp
 \cdots  
\\
m \otimes_\Aopp 1 
\otimes_\Aopp u^1  \otimes_\Aopp   \cdots    \otimes_\Aopp  u^n 
\end{array}\right.   & \!\!\!\!\!\!\!\!\!\!\!  \begin{array}{l} 
\mbox{if} \ i\!=\!0, \\ 
\mbox{if} \ 1 \!\leq\! i \!\leq\! n-1, \\  \mbox{if} \ i\! = \!n, \end{array} \\
t_n(m \otimes_\Aopp x) 
&\!\!\!\!\!=&\!\!\!\!\! 
m_{(0)} u^1_+ \otimes_\Aopp u^2_+ \otimes_\Aopp  \cdots  \otimes_\Aopp u^n_+ \otimes_\Aopp u^n_- \cdots u^1_- m_{(-1)},  
& \\
\end{array}
$$
where we abbreviate $x:=u^1
\otimes_\Aopp \cdots \otimes_\Aopp u^n$.

It is precisely this variation of
Hopf-cyclic theory that has the ordinary
Hopf algebroid homology as underlying
simplicial homology, and in particular
the one which
reduces to the original cyclic homology
of an associative algebra when
one applies it to the Hopf algebroid $U=\Ae$. 
Now the freedom to consider arbitrary
coefficients becomes crucial, since it
allows one for example to incorporate
the twisted cyclic homology of
Kustermans, Murphy and Tuset 
\cite{KusMurTus:DCOQGATCC}. 
That paper has been the first one to generalise the Connes-Rinehart-Tsygan operator $B$ on the 
Hochschild homology of an associative algebra to coefficients in $(A,A)$-bimodules other than $A$ itself, namely those where one of the two 
actions of $A$ 
on itself is twisted by an algebra automorphism $\sigma$. 
When viewed as a special case of the above Hopf-cyclic homology, these coefficients are not stable anti Yetter-Drinfel'd, 
and one sees that an $\Ae$-comodule structure is all one needs to define $B$.
We discuss this example in the last
section of the
paper, and also the example of Lie-Rinehart
homology which is an important classical case of the cyclic cohomology theory from Theorem~\ref{main}.

N.K.~is supported by an I.H.\'E.S. visiting grant. 
U.K.~is supported by the EPSRC 
fellowship EP/E/043267/1 and partially by the Marie Curie PIRSES-GA-2008-230836
network.   

\section{Preliminaries}

\subsection{Some conventions}
Throughout this note, ``ring'' means 
``unital and associative ring'', and we fix   
a commutative ring $k$.
All other algebras, modules etc.~will 
have an underlying structure of a
$k$-module. Secondly, we fix a
$k$-algebra $A$, i.e.\ a ring with a 
ring homomorphism 
$ \eta_\ahha : k \rightarrow Z(A)$ to  
its centre.
We denote by $\amod$ the category of 
left $A$-modules, 
by $A^\mathrm{op}$ the 
opposite and by
$A^\mathrm{e} := A \otimes_k A^\mathrm{op}$ 
the enveloping algebra 
of $A$. 
An {\em $A$-ring} is a monoid
in the monoidal
category $(\amoda, \otimes_A, A)$ of $\Ae$-modules (i.e.\ $(A,A)$-bimodules
with symmetric action of $k$), fulfilling
associativity and unitality. Likewise,  
an {\em $A$-coring} is a comonoid in
$(\amoda, \otimes_A, A)$,  
fulfilling coassociativity and
counitality. 

Our main object is an $\Ae$-ring $U$
(a monoid in $(\Ae \otimes_k
\Ae)\mbox{-}\mathbf{Mod}$). Explicitly,
such an $\Ae$-ring is given by a $k$-algebra 
homomorphism 
$
		  \eta=\eta_U : A^\mathrm{e} \rightarrow U
$ 
whose restrictions
$$
		  \sll:= \eta( - \otimes_k 1) : 
		  A \to U 
		  \quad \mbox{and} \quad 
		  \tl := \eta(1 \otimes_k -) : 
		  \Aop \to U
$$
will be called the {\em source} and {\em
target} map. Left and right multiplication in $U$
give rise to an 
$(\Ae,\Ae)$-bimodule structure on $U$,
that is, four commuting actions of $A$
that we denote by 
\begin{equation}\label{bimod-lmod}
		  a \lact u \ract b :=\sll(a)t(b)u,\quad 
		  a \blact u \bract b
		  :=u\sll(b)t(a), 
		  \quad a,b\in A, \ u \in U.
\end{equation}
If not stated otherwise, we view $U$ as
an $(A,A)$-bimodule using the actions $\lact,\ract$.
In particular, we define 
the tensor product
$U \otimes_A U$
with respect to this bimodule structure.
On the other hand, using the
actions $\blact, \bract$ permits to define the 
{\em Takeuchi product}
\begin{equation}
\label{taki}
		  U \times_A U :=  
		  \{\textstyle\sum_i u_i \otimes_A
		  v_i 
		  \in U \otimes_A U \mid 
		  \sum_i a \blact u_i 
		  \otimes_A v_i = 
		  \sum_i u \otimes_A v_i \bract a, 
		  \ \forall a \in A\}.
\end{equation}
This is an
$\Ae$-ring via factorwise
multiplication. Similarly,
$\mathrm{End}_k(A)$ is an $\Ae$-ring
with ring structure given by composition
and $(A,A)$-bimodule structure 
$(a \varphi b)(c):=\varphi (bca)$,
$\varphi \in \mathrm{End}_k(A)$, $a,b,c
\in A$.

\subsection{Bialgebroids} {\cite{Tak:GOAOAA}}
Bialgebroids are a generalisation of
bialgebras. An important subtlety 
is that the algebra and coalgebra 
structure are defined in
different monoidal categories. 

\begin{definition}\label{left-bialg}
Let $A$ be a $k$-algebra.
A {\em left bialgebroid} over $A$ 
(or {\em $A$-bialgebroid} or 
{\em $\times_A$-bialgebra}) is an $\Ae$-ring $U$
together with two homomorphisms of
 $\Ae$-rings
$$
		  \Deltaell : U \rightarrow U
		  \times_A U,\quad
		  \hat \eps : U \rightarrow \mathrm{End}_k(A)  
$$
which turn $U$ into an $A$-coring 
with coproduct $\Deltaell$ (viewed as a
 map $U \rightarrow U \otimes_A U$) and
 counit $\eps : U \rightarrow A$,
$u \mapsto (\hat\eps (u))(1)$.
\end{definition}

Note that this means for example that
$\eps$ satisfies for all $u, v\in U$
\begin{equation*}\label{leftcou}
\eps(uv) = \eps(u \bract \eps (v)) =   \eps(\eps (v) \blact u).
\end{equation*}

Analogously one defines {\em right} 
bialgebroids where the roles of
$\lact,\ract$ and $\blact,\bract$ are
exchanged. 
We shall not write out the details, but rather refer
to \cite{KadSzl:BAODTEAD,Boe:HA}. 

\subsection{Left Hopf algebroids} {\cite{Schau:DADOQGHA}}
Left Hopf algebroids have been
introduced by Schauenburg 
under the name {\em $\times_A$-Hopf
algebras} and 
generalise Hopf 
algebras towards left bialgebroids.
For a left bialgebroid $U$ over $A$, one
defines the
{\em (Hopf-)Galois map} 
\begin{equation}
\label{Galois}
\beta: {}_\blact U \otimes_\Aopp U_\ract \to 
U_\ract \otimes_A {}_\lact U, \quad u
\otimes_\Aopp v \mapsto  
u_{(1)}  \otimes_A u_{(2)}v, 
\end{equation}
where 
\begin{equation}
\label{tata}
 {}_\blact U \otimes_\Aopp U_\ract = U
 \otimes_k U/
{{\rm
 span}\{a \blact u \otimes_k v - u
 \otimes_k v \ract a\,|\,
u,v \in U, a \in A \}}.
\end{equation}

\begin{dfn}\cite{Schau:DADOQGHA}
\label{hopftimesleft}
A left $A$-bialgebroid $U$ is 
called a 
{\em left Hopf algebroid} 
(or {\em $\times_A$-Hopf algebra}) if
$\beta$ is a bijection. 
\end{dfn}

In a similar manner, one
 defines {\em right Hopf algebroids} (cf.\ \cite[Prop.\ 4.2]{BoeSzl:HAWBAAIAD}). 

\noindent Following \cite{Schau:DADOQGHA}, we
adopt a Sweedler-type notation 
\begin{equation}\label{pm}
		  u_+ \otimes_\Aopp u_- := 
		  \beta^{-1}( u \otimes_A 1)
\end{equation}
for the so-called {\em translation map}
$
		  \beta^{-1}(- \otimes_A 1) : U \rightarrow 
		  {}_\blact U \otimes_\Aopp U_\ract.
$ 
Useful for our subsequent calculations, 
one has for all $u, v \in U$, $a \in A$
\cite[Prop.~3.7]{Schau:DADOQGHA}:
\begin{eqnarray}
\label{Sch1}
u_{+(1)} \otimes_A u_{+(2)} u_- &=& u \otimes_A 1 \in U_\ract \otimes_A {}_\lact U, \\
\label{Sch2}
u_{(1)+} \otimes_\Aopp u_{(1)-} u_{(2)}  &=& u \otimes_\Aopp  1 \in  {}_\blact U
\otimes_\Aopp  U_\ract, \\ 
\label{Sch3}
u_+ \otimes_\Aopp  u_- & \in 
& U \times_\Aop U, \\
\label{Sch38}
u_{+(1)} \otimes_A u_{+(2)} \otimes_\Aopp  u_{-} &=& u_{(1)} \otimes_A
u_{(2)+} \otimes_\Aopp  u_{(2)-},\\
\label{Sch37}
u_+ \otimes_\Aopp  u_{-(1)} \otimes_A u_{-(2)} &=& u_{++} \otimes_\Aop
u_- \otimes_A u_{+-}, \\
\label{Sch4}
(uv)_+ \otimes_\Aopp  (uv)_- &=& u_+v_+
\otimes_\Aopp  v_-u_-, 
\\ 
\label{Sch47}
u_+u_- &=& \sll (\eps (u)), \\
\label{Sch48}
u_+ \tl (\eps (u_-)) &=& u, \\
\label{Sch5}
(\sll (a) \tl (b))_+ \otimes_\Aopp  (\sll (a) \tl (b) )_- 
&=& \sll (a) \otimes_\Aopp  \sll (b), 
\end{eqnarray}
where in (\ref{Sch3}) we mean the Takeuchi product
\begin{equation*}
\label{petrarca}
		  U \times_\Aop U:=
		  \left\{\textstyle\sum_i u_i \otimes_\Aopp  v_i \in 
		  {}_\blact U \otimes_\Aopp  U_\ract\,|\,
		  \sum_i u_i \ract a \otimes_\Aopp  v_i=
		  \sum_i u_i \otimes_\Aopp  a \blact
		  v_i
		  \right\},
\end{equation*}
which is an algebra by factorwise
multiplication, but with opposite 
multiplication on the second factor.
Note that in (\ref{Sch37}) the tensor product
over $A^\mathrm{op}$ links the first and
third tensor component. 
By (\ref{Sch1}) and (\ref{Sch3})
one can write 
\begin{equation}
\label{pmb}
\beta^{-1}(u \otimes_A v) = u_+ \otimes_\Aopp  u_-v,
\end{equation}
 which is easily checked to be
 well-defined over $A$ with (\ref{Sch4}) 
 and (\ref{Sch5}).

\begin{rem}
Observe that there is no notion of
antipode for a left Hopf algebroid. B{\"o}hm and Szlach{\'a}nyi  
have introduced the concept of a ({\em full} or {\em two-sided}) {\em
Hopf algebroid} \cite{Boe:HA}, which is, roughly speaking, an
algebra equipped with a left and a right 
bialgebroid structure over
anti-isomorphic base algebras $A$ and
$B$, together with an antipode mapping
from the left bialgebroid to the
right. However, it is proved in 
\cite[Prop.\ 4.2]{BoeSzl:HAWBAAIAD} 
that a
full Hopf algebroid with invertible 
antipode can be equivalently described 
as an algebra with both a left and a
right Hopf algebroid structure subject
to compatibility conditions, 
which motivates to speak of left Hopf algebroids
rather than $\times_A$-Hopf algebras.  
\end{rem}

\subsection{$U$-modules}
Let $U$ be a left bialgebroid with structure maps as before. Left and right $U$-modules are defined as modules over the ring $U$, with respective actions denoted by juxtaposition or, at times, by a dot for the sake of clarity. 
We denote the respective categories by
$\umod$ and $\modu$; while $\umod$ is
a monoidal category, $\modu$ is in general
not \cite{Schau:BONCRAASTFHB}. One has a forgetful functor  
$\umod \rightarrow \amoda$ using which
we consider every left 
$U$-module $N$ also as an $(A,A)$-bimodule
with actions 
\begin{equation}
\label{brot}
	anb := 	  a \lact n \ract b := \sll(a)\tl(b)n,\quad
		  a,b \in A,n \in N.
\end{equation} 
Similarly, every right $U$-module $M$ is also 
an $(A,A)$-bimodule via
\begin{equation}\label{salz}
	amb:= 	  a \blact m \bract b := 
			m \sll(b) \tl(a),\quad
		  a,b \in A,m \in M,
\end{equation} 
and in both cases we usually prefer to express these actions just by juxtaposition if no ambiguity is to be expected.

\subsection{$U$-comodules}

Similarly as for coalgebras, one may
define comodules over 
bialgebroids, but the underlying
$A$-module structures need some extra
attention. For the following definition
confer e.g.\
\cite{Schau:BONCRAASTFHB,Boe:GTFHA,
BrzWis:CAC}.

\begin{dfn}
\label{tempo}
A {\em left $U$-comodule} for a left bialgebroid $U$ over $A$
is a left comodule of the underlying $A$-coring $(U, \Deltaell,
\eps)$, 
i.e.\ a left $A$-module $M$ with action $L_\ahha: (a,m) \mapsto am$ and a
left $A$-module map  
\begin{equation*}
		  \gD_\emme: 
		  M \to U_\ract \otimes_A M, 
		  \quad 
		  m \mapsto m_{(-1)} \otimes_A m_{(0)}
\end{equation*}
satisfying the usual coassociativity and
 counitality axioms 
$$
		  (\Deltaell \otimes \id) \circ \gD_\emme
		  =  
		  (\id \otimes \gD_\emme) \circ \gD_\emme	  
		  \quad \mbox{and} \quad 
		  L_\ahha \circ (\eps \otimes \id)
		  \circ \gD_\emme =  \id.
$$
%
We denote the category of left
 $U$-comodules 
by $\ucomod$.
\end{dfn}
Analogously one defines {\em
right} $U$-comodules and comodules for
right bialgebroids.  

\noindent On any left $U$-comodule
one can additionally define 
a right $A$-action 
\begin{equation}\label{grimm}
ma := \eps\big(m_{(-1)}\sll (a)\big)m_{(0)}.
\end{equation}
This is the unique action that turns $M$ into an 
$\Ae$-module in such a way that the coaction is an
$\Ae$-module morphism
$$
		  \gD_\emme: M \to U \times_A M, 
$$
where $U \times_A M$ is the Takeuchi product
$$
U \times_A M := \{\textstyle\sum_i u_i \otimes_A m_i		 
		  \in U \otimes_A M \mid
		  \sum_i u_i\tl (a) \otimes_A m_i
		  = \sum_i u_i \otimes_A m_i a, \
		  \forall a \in A\}. 
$$
As a result, $\Delta_\emme$ satisfies the identities 
\begin{eqnarray}
\label{maotsetung}
		  \gD_\emme(amb) &=& 
		  \sll (a)  m_{(-1)} \sll (b) \otimes_A
		  m_{(0)}, \\
		  \label{douceuretresistance}
		  m_{(-1)} \otimes_A m_{(0)}a
		  &=&  
		  m_{(-1)}\tl (a) \otimes_A m_{(0)}.
\end{eqnarray}
This is compatible
with (\ref{grimm}) since 
one has $\eps(u \sll(a))=\eps(u
\tl(a))$ for all $u \in U,a \in A$.

One can then prove (see \cite[Thm.\ 3.18]{Boe:HA} and
\cite[Prop.\ 5.6]{Schau:BONCRAASTFHB}) that
$\ucomod$ has a monoidal structure
such that the forgetful 
functor $\ucomod \to \amoda$ is
		  monoidal:
for any two comodules $M, M'
\in \ucomod$, their tensor product $M \otimes_A M'$ is a left
$U$-comodule by means of the coaction
 \begin{equation*}
\begin{array}{rcl}
\gD_{\scriptscriptstyle{M \otimes_A M'}} {}: M \otimes_A M' &\to& U \otimes_A (M
\otimes_A M'), \\
 m \otimes_A m' &\mapsto& m_{(-1)} m'_{(-1)} \otimes_A m_{(0)} \otimes_A
m'_{(0)}. 
\end{array}
\end{equation*}
The map $\gD_{\scriptscriptstyle{M \otimes_A M'}} {}$ is
easily checked to be well-defined.

\begin{rem}
If $\gs \in U$ is a grouplike element in
a (left) bialgebroid, then 
\begin{equation*}
		  \qttr \gD {}{}\ahha{} (a) :=
		  \tl(a) \gs 
		  \quad \hbox{and} 
		  \quad 
		  \gD_\ahha (a) := 
		  \sll(a)\gs, 
		  \qquad a \in A, 
\end{equation*}
define right and left $U$-comodule
 structures on $A$, which we shall 
refer to as induced by $\gs$.
In particular, the 
base algebra $A$ carries for any
bialgebroid both a canonical right
and a canonical left coaction induced by
$\sigma=1$, 
contrasting the fact that $A$ carries
in general only a canonical 
left $U$-module structure induced
by $\eps$, but no right one.
\end{rem}

\begin{rem}
\label{cannibale}
A special feature for bialgebroids $U$ over commutative base algebras $A$ with $s=t$ 
is that every left $A$-module $M$ can be made into a, say, left 
$U$-comodule by means of the trivial coaction $m \mapsto 1 \otimes_A m$ 
(it follows from \rmref{maotsetung} that this is not possible in general).
\end{rem}

\subsection{Stable anti Yetter-Drinfel'd modules}

The following definition is the left
bialgebroid right module and left
comodule version of the corresponding notion in
\cite{BoeSte:CCOBAAVC}. For
Hopf algebras, the concept  
goes back to \cite{HajKhaRanSom:SAYDM}. 
\begin{dfn}
\label{SAYD}
Let $U$ be a left Hopf algebroid 
with structure maps as before, and let
 $M$ simultaneously be a left
 $U$-comodule with coaction denoted as
 above and a right $U$-module with
 action denoted by $(m, u) \mapsto mu$
 for $u \in U$, $m \in M$. We call $M$
 an {\em anti Yetter-Drinfel'd (aYD)
 module} provided the following holds: 
\begin{enumerate}
\item
The $\Ae$-module structure on $M$ originating from its nature as $U$-comodule coincides with the 
$\Ae$-module structure induced by the right $U$-action on $M$, i.e., for all $a, b \in A$ and $m \in M$ we have
\begin{equation}
\label{campanilla}
amb = a \blact m \bract b,
\end{equation}
where the right $A$-module structure on the left hand side is given by \rmref{grimm}. 
\item
For $u \in U$ and $m \in M$ one has
\begin{equation}
\label{huhomezone}
		  \gD_\emme(mu) = 
		  u_- m_{(-1)} u_{+(1)} \otimes_A m_{(0)} u_{+(2)}.
\end{equation}
\end{enumerate}
The anti Yetter-Drinfel'd module $M$ is
 said to be {\em stable (SaYD)} if for
 all $m \in M$ one has
$$
m_{(0)}m_{(-1)} = m.
$$
\end{dfn}

\begin{rem}
Observe that it is not obvious
that the expression on the right
hand side of (\ref{huhomezone}) makes
sense, but this follows 
from \rmref{taki}, \rmref{Sch3},
and \rmref{douceuretresistance}.  
\end{rem}

\subsection{Cyclic (co)homology}\label{defilambda}
We will not recall the formalism of
cyclic (co)homology in full detail. However, since this
notion is not contained in our standard
reference \cite{Lod:CH} we recall that 
para-(co)cyclic $k$-modules generalise 
(co)cyclic $k$-modules by dropping the
condition that the cyclic operator
implements an action of
$\mathbb{Z}/(n+1)\mathbb{Z}$ on the
degree $n$ part. Thus a para-cyclic
$k$-module is a simplicial
$k$-module
$(C_\bull,d_\bull,s_\bull)$ 
and a para-cocyclic $k$-module is a
cosimplicial $k$-module
$(C^\bull,\delta_\bull,\sigma_\bull)$
together with $k$-linear maps 
$t_n : C_n \rightarrow C_n$ and 
$\tau_n : C^n \rightarrow C^n$
satisfying, respectively
\begin{equation}
\label{belleville}
\!\!\!\!\!\!\!\!
\begin{array}{cc}
\begin{array}{rcl}
d_i \circ t_n  &\!\!\!\!\!\!=&\!\!\!\!\!\! \left\{\!\!\!
\begin{array}{ll}
t_{n-1} \circ d_{i-1} 
& \!\!\!\! \mbox{if} \ 1 \leq i \leq n, \\
 d_n & \!\!\!\! \mbox{if} \
i = 0,
\end{array}\right. \\
s_i \circ t_n &\!\!\!\!\!\!=&\!\!\!\!\!\! \left\{\!\!\!
\begin{array}{ll}
t_{n+1} \circ s_{i-1} & \!\!\!\!
\mbox{if} \ 1 \leq i \leq n, \\
 t^2_{n+1} \circ s_n
 & \!\!\!\! \mbox{if} \
i = 0, \\
\end{array}\right.
\end{array}
\!\!\!\!&\!\!\!\!
\begin{array}{rcll}
\tau_n \circ \gd_i &\!\!\!\!\!=&\!\!\!\!\! \left\{\!\!\!
\begin{array}{l}
\gd_{i-1}\circ \tau_{n-1} \\
 \gd_n 
\end{array}\right. & \!\!\!\!\!\!\!\!\! \begin{array}{l} \mbox{if} \ 1 \leq i \leq n,
 \\ \mbox{if} \ i = 0,
 \end{array} \\
\tau_n \circ \sigma_i &\!\!\!\!\!=&\!\!\!\!\! \left\{\!\!\!
\begin{array}{l}
\sigma_{i-1} \circ \tau_{n+1} \\
 \sigma_n \circ \tau^2_{n+1} 
\end{array}\right. & \!\!\!\!\!\!\!\!\!
\begin{array}{l} \mbox{if} \ 1 \leq i
 \leq n,  
\\ \mbox{if} \ i = 0. \end{array} 
\end{array}
\end{array}
\end{equation}
It follows from these relations that
$t_n^{n+1}$ respectively $\tau_n^{n+1}$
commutes with all the (co)faces and
(co)degeneracies. Hence any
para-(co)cyclic $k$-module defines a
(co)cyclic one formed by the cokernels
of $\mathrm{id}_{C_n}-t_n^{n+1}$
respectively the kernels of $\mathrm{id}_{C^n}-\tau_n^{n+1}$.
The cyclic (co)homology of a
para-(co)cyclic $k$-module is defined
as the cyclic (co)homology of this
associated (co)cyclic $k$-module.

Just like (co)cyclic $k$-modules,
para-(co)cyclic ones can be viewed more
conceptually as functors 
$\gL^\op \to \kmod$ respectively
$\gL \to  \kmod$, where $\Lambda$ is the
appropriate covering of Connes' cyclic
category $\Lambda_1$. Hence as Connes' category, 
$\Lambda$ has objects $\{[n]\}_{n
\in \mathbb{N}}$ and the set of
morphisms has generators
obeying the same
relations except
for 
$\tau_n^{n+1}=\mathrm{id}_{[n]}$. The
localisation of this category at the set
of all $\tau_n$ has been studied already
by Fe{\u\i}gin and Tsygan in
\cite{FeiTsy:AKT} where it is denoted by
$\Lambda_\infty$. However, we stress
that in the present article $\tau_n$ is not
assumed to be an isomorphism. We will
call $\Lambda$ the para-cyclic category. 

\section{Hopf-Cyclic Cohomology with Coefficients}
\label{effacersvp}

\subsection{Para-cocylic
structures on corings}
Following \cite{Cra:CCOHA, BoeSte:CCOBAAVC} we
first define in this section 
an auxiliary para-cocyclic $k$-module 
that is relatively easy to
construct. 
For this, $U$ just needs to be
a left bialgebroid
and $M$ needs to be a left $U$-comodule.  
Define then 
$$
		  \boehmcomplex^\bull(U,M) := 
		  U^{\otimes_A \bull+1} \otimes_\Aee
		  M,
$$
where $U$ is considered with the usual
$(A,A)$-bimodule structure given by $\lact,\ract$.
So $\boehmcomplex^\bull(U,M)$ is 
${\due  U \lact \ract}^{\otimes_A \bull+1} \otimes_k
		  M$
modulo the span of elements 
$$
		  \{ u^0 \otimes_\ahha \cdots \otimes_\ahha
		  u^n 
		  \otimes_\Aee amb - 
		  b \lact u^0 \otimes_\ahha \cdots
		  \otimes_\ahha u^n 
		  \ract a \otimes_\Aee m \mid a, b \in A \}. 
$$
Now define the following operators,
where we abbreviate $w:=u^0 \otimes_\ahha
\cdots \otimes_\ahha u^n$:  
\begin{equation}
\hspace*{-.4cm}\begin{array}{rcll}\label{duracell}
		  \gd'_i(w \otimes_\Aee m)  &\!\!\!\!\!=&\!\!\!\!\! \left\{\!\!\!
\begin{array}{l}
		  u^0 \otimes_\ahha \cdots
			\otimes_\ahha 
		  \Deltaell (u^i) \otimes_\ahha
		  \cdots \otimes_\ahha u^n
		  \otimes_\Aee m \\ 
u^0_{(2)} \otimes_\ahha u^1 \otimes_\ahha \cdots \otimes_\ahha  m_{(-1)} u^0_{(1)} \otimes_\Aee m_{(0)} 
\end{array}\right. & \!\!\!\!\! \begin{array}{l} 
\mbox{if} \ 0 \leq i
  \leq n, \\ \mbox{if} \ i = n + 1, \end{array} \\
\sigma'_i(w \otimes_\Aee m )
 &\!\!\!\!\!=&\!\!\!\!\! u^0 \otimes_\ahha
 \cdots \otimes_\ahha \tl (\eps
 (u^{i+1})) u^i 
\otimes_\ahha \cdots \otimes_\ahha u^n
\otimes_\Aee m & 
\!\! 0 \leq i \leq n-1, \\
\tau'_n(w \otimes_\Aee m) 
&\!\!\!\!\!=&\!\!\!\!\!  u^1 \otimes_\ahha
\cdots \otimes_\ahha u^n \otimes_\ahha m_{(-1)} u^0 \otimes_\Aee m_{(0)}, &
\end{array}
\end{equation}
which are shown to be well-defined using
the Takeuchi condition
for $\gD_\emme$. The following is
checked in a straightforward manner: 

\begin{lem}
\label{nepaseffacer}
 The operators
 $(\gd'_\bull, \gs'_\bull, \tau'_\bull)$
turn $\boehmcomplex^\bull(U,M)$
into a para-cocyclic $k$-module. 
\end{lem}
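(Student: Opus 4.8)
The plan is to verify directly that the triple $(\gd'_\bull, \gs'_\bull, \tau'_\bull)$ satisfies the defining identities of a para-cocyclic $k$-module, namely the cosimplicial relations among the $\gd'_i$ and $\sigma'_i$ together with the mixed relations \eqref{belleville} involving $\tau'_n$. Before touching those, I would first record that each operator is well-defined on $\boehmcomplex^\bull(U,M)$: for $\gd'_i$ with $0\le i\le n$ this is the coassociativity/counitality bookkeeping for the $A$-coring $(U,\Deltaell,\eps)$ and the Takeuchi condition \eqref{taki} on $\Deltaell$; for $\gd'_{n+1}$ and $\tau'_n$ one uses that $\gD_\emme$ lands in the Takeuchi product $U\times_A M$, i.e.\ \eqref{douceuretresistance} together with \eqref{maotsetung}, to see that moving $a\in A$ across the $\otimes_\Aee$ is consistent; for $\sigma'_i$ it is just counitality of $\eps$ together with the bimodule relations \eqref{bimod-lmod}.

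Next I would dispose of the purely cosimplicial part. The cofaces $\gd'_0,\dots,\gd'_n$ are exactly the coface maps of the standard cobar-type complex of the coring $U$ tensored with $M$, so the relations $\gd'_j\gd'_i = \gd'_i\gd'_{j-1}$ for $i<j$ among the ``interior'' indices are the usual ones coming from coassociativity of $\Deltaell$. The only genuinely new checks are those involving the last coface $\gd'_{n+1}$, which inserts the coaction $\gD_\emme$ on $M$ while cyclically shifting $u^0$; here one uses coassociativity of $\gD_\emme$ as a left $U$-comodule, $(\Deltaell\otimes\id)\circ\gD_\emme = (\id\otimes\gD_\emme)\circ\gD_\emme$, in the same way one treats the extra face in the bar resolution of a comodule algebra. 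The codegeneracy relations $\sigma'_j\sigma'_i = \sigma'_i\sigma'_{j+1}$ and the mixed $\gd'$--$\sigma'$ relations follow from counitality $\eps(u_{(1)})\blact u_{(2)} = u = u_{(1)}\bract \eps(u_{(2)})$ and the compatibility $\eps(uv)=\eps(u\bract\eps(v))$ noted after Definition~\ref{left-bialg}.

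Then comes the main point: the relations \eqref{belleville} for $\tau'_n$. One checks $\tau'_n\circ\gd'_i = \gd'_{i-1}\circ\tau'_{n-1}$ for $1\le i\le n$, $\tau'_n\circ\gd'_{n+1}$ mapping in a way compatible with... wait, in this presentation the relevant identities are $\tau'_n\circ\gd'_0 = \gd'_n$ and $\tau'_n\circ\gd'_i = \gd'_{i-1}\circ\tau'_{n-1}$ for $1\le i\le n$, plus the two codegeneracy relations $\tau'_n\circ\sigma'_0 = \sigma'_n\circ(\tau'_{n+1})^2$ and $\tau'_n\circ\sigma'_i = \sigma'_{i-1}\circ\tau'_{n+1}$ for $1\le i\le n$. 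Most of these are immediate from the definition of $\tau'_n$ as the cyclic shift $u^0\mapsto m_{(-1)}u^0$, $m\mapsto m_{(0)}$ relative to the comodule structure. The identity $\tau'_n\circ\gd'_0 = \gd'_n$ uses coassociativity of $\Deltaell$ to recognise that applying $\Deltaell$ to the new first slot after a shift reproduces applying $\Deltaell$ to the old last slot; the degeneracy relations similarly use counitality. The one place where real care is needed is the relation tying the \emph{last} coface $\gd'_{n+1}$ (the coaction-inserting one) to $\tau'_n$ — this is where coassociativity of $\gD_\emme$ combined with coassociativity of $\Deltaell$ both enter, and it is the analogue of the computation showing the ``extra'' face and the cyclic operator are compatible in Crainic's and B\"ohm--\c Stefan's constructions.

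I expect the main obstacle to be purely notational rather than conceptual: keeping the four commuting $A$-actions \eqref{bimod-lmod} straight through the Sweedler manipulations, ensuring at each step that terms like $m_{(-1)}u^0_{(1)}\otimes_\Aee m_{(0)}$ are balanced over both copies of $A$ in $\otimes_\Aee$, and tracking which coassociativity (of $\Deltaell$ or of $\gD_\emme$) is being invoked. Since every identity reduces, after expanding the relevant coproducts, to an instance of coassociativity or counitality of one of the two coalgebra-type structures, together with the already-verified well-definedness over the Takeuchi products, the verification is ``straightforward'' exactly in the sense claimed, and I would present it by doing one representative case of each type (one interior coface relation, the relation $\tau'_n\gd'_0=\gd'_n$, and one $\tau'$--$\sigma'$ relation) in detail and leaving the rest to the reader.
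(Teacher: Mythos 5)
Your plan is correct and coincides with what the paper does: the paper offers no written proof at all (it simply asserts the verification is "checked in a straightforward manner", with well-definedness via the Takeuchi condition on $\gD_\emme$), and your direct case-by-case verification — well-definedness over $\otimes_\Aee$ and the Sweedler presentations, the cosimplicial identities, then the relations \eqref{belleville} with the only delicate cases being those mixing $\tau'_n$ with the coaction-inserting last coface and with $\gs'_0$ — is exactly that verification, with the right ingredients (coassociativity of $\Deltaell$ and of $\gD_\emme$, counitality, and $\eps(uv)=\eps(u\bract\eps(v))=\eps(\eps(v)\blact u)$) attached to each step. Two trivial corrections: the counit axioms of the coring $(U,\Deltaell,\eps)$ are stated for the $\lact,\ract$ actions, i.e.\ $\eps(u_{(1)})\lact u_{(2)}=u=u_{(1)}\ract\eps(u_{(2)})$, not for $\blact,\bract$; and $\tau'_n\circ\gd'_0=\gd'_n$ is in fact immediate from the definitions of $\gd'_0$, $\tau'_n$ and the last coface, with no coassociativity needed.
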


\subsection{The quotient $\boehmcomplex^\bull(U,M) 
\rightarrow C^\bull(U,M)$}
The para-cocyclic $k$-module that defines 
Hopf-cyclic cohomology is
the canonical quotient
$$
		  U^{\otimes_A \bull +1} \otimes_\Uop M
$$
of $\boehmcomplex^\bull(U,M)=
U^{\otimes_A \bull+1} \otimes_\Ae M$ defined
above. This quotient makes sense whenever $M$
also carries a right
$U$-module structure that induces the
same $\Ae$-module structure as the left
$U$-coaction, see \rmref{campanilla}.
In the next section we will discuss that the para-cocyclic structure 
of $\boehmcomplex^\bull(U,M)$ 
descends to
this quotient.
However, 
for the applications in noncommutative
geometry one rewrites 
the resulting para-cocyclic $k$-module
so that the object (but not the cocyclic operator) 
takes an easier form,
and in the present
section we construct the involved isomorphism.

Recall (e.g.~from \cite[Lem.\
3]{KowKra:DAPIACT}) 
that if $U$ is a left Hopf algebroid, 
then the tensor
product $N \otimes_A M$ of 
$M \in \modu$, $N \in \umod$
 (considered 
with the $(A,A)$-bimodule
structures \rmref{brot} and
\rmref{salz}) 
carries a right $U$-module structure with action
$$
		  (n \otimes_A m)u := 
		  u_-n \otimes_A mu_+,
$$
and hence using \rmref{brot} and \rmref{salz}
becomes an $(A, A)$-bimodule by
$$
		  a \blact (n \otimes_A m) \bract
		  b 
		  := 
		  \big(n \otimes m\big)\tl (a)
		  \sll (b)
		  = \sll (a) n \otimes_A m 
		  \sll (b) = a \lact n \otimes_A m \bract b,
$$
where in the second equation \rmref{Sch5} was used.

Now observe that on a right $U$-module
of this form, the coinvariant functor  
$$
		  - \otimes_U A : \modu
		  \rightarrow \kmod
$$
takes a particularly simple form:
\begin{lemma}\label{eli}
If $U$ is a left Hopf algebroid, then
for all  
$M \in \modu$, $N \in \umod$ there is a
natural isomorphism
$(N \otimes_A M) \otimes_U A \simeq N
 \otimes_\Uop M$.
\end{lemma}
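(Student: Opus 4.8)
The plan is to construct mutually inverse $k$-linear maps $\phi\colon (N\otimes_A M)\otimes_U A\to N\otimes_\Uop M$ and $\psi\colon N\otimes_\Uop M\to(N\otimes_A M)\otimes_U A$. For $\phi$, I would first note that $A\cong U/\Ker\eps$ as a left $U$-module: the left ideal $\Ker\eps$ is the annihilator of $1\in A$ (if $\eps(v)=0$ then $\eps(uv)=\eps(u\bract\eps(v))=0$), and $u\mapsto\eps(u)$ identifies the quotient with $A$, with $\overline{\sll(a)}$ and $\overline{\tl(a)}$ both mapping to $a$. Hence $(N\otimes_A M)\otimes_U A$ is the quotient of the right $U$-module $N\otimes_A M$ by $(N\otimes_A M)\cdot\Ker\eps$. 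Since $(n\otimes_A m)\cdot u=u_-n\otimes_A mu_+$ and $u_+u_-=\sll(\eps(u))$ by \rmref{Sch47}, the canonical surjection $\theta\colon N\otimes_A M\to N\otimes_\Uop M$ — which exists because the defining relations $un\otimes_\Uop m=n\otimes_\Uop mu$ of $N\otimes_\Uop M$ specialise at $u=\tl(a)$ to those of $N\otimes_A M$ — sends $(n\otimes_A m)\cdot u$ to $u_-n\otimes_\Uop mu_+=n\otimes_\Uop m\,\sll(\eps(u))$, which vanishes whenever $\eps(u)=0$, so $\theta$ descends to a surjection
$$
\phi\colon (N\otimes_A M)\otimes_U A\longrightarrow N\otimes_\Uop M,\qquad (n\otimes_A m)\otimes_U a\longmapsto n\otimes_\Uop m\,\sll(a).
$$
Here the formula rests on $(n\otimes_A m)\otimes_U a=(n\otimes_A m\,\sll(a))\otimes_U 1=(\sll(a)n\otimes_A m)\otimes_U 1$, which follows from \rmref{Sch5} (which gives $\sll(a)_+\otimes_\Aopp\sll(a)_-=\sll(a)\otimes_\Aopp 1$ and $\tl(a)_+\otimes_\Aopp\tl(a)_-=1\otimes_\Aopp\sll(a)$) via $(n\otimes_A m)\otimes_U a=\overline{(n\otimes_A m)\cdot\sll(a)}=\overline{(n\otimes_A m)\cdot\tl(a)}$.

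For $\psi$ I would simply put $\psi(n\otimes_\Uop m):=(n\otimes_A m)\otimes_U 1$. Once $\psi$ is known to be well defined, $\phi\circ\psi=\id$ is immediate, $\psi\circ\phi=\id$ follows from $(n\otimes_A m\,\sll(a))\otimes_U 1=(n\otimes_A m)\otimes_U a$, and naturality in $M$ and $N$ is clear from the formulas; so $\phi$ will be an isomorphism. The whole content of the lemma is therefore the well-definedness of $\psi$, i.e.\ the identity
$$
(un\otimes_A m)\otimes_U 1=(n\otimes_A mu)\otimes_U 1\qquad(u\in U,\ n\in N,\ m\in M)
$$
in $(N\otimes_A M)\otimes_U A$ — the one step that genuinely uses the left Hopf (and not merely bialgebroid) structure, entering through the translation-map identity \rmref{Sch2}.

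To prove this identity I would fix a representative $\Deltaell(u)=\sum u_{(1)}\otimes_A u_{(2)}$ and apply the $U$-balancing relation $\big((n'\otimes_A m')\cdot v\big)\otimes_U 1=(n'\otimes_A m')\otimes_U\eps(v)$ termwise, taking $v=u_{(1)}$, $n'=u_{(2)}n$, $m'=m$. By the displayed identities of the first paragraph and the counit axiom $\sum\sll(\eps(u_{(1)}))u_{(2)}=u$, its right-hand side is $\sum(u_{(2)}n\otimes_A m)\otimes_U\eps(u_{(1)})=\big(\sum\sll(\eps(u_{(1)}))u_{(2)}n\otimes_A m\big)\otimes_U 1=(un\otimes_A m)\otimes_U 1$, while its left-hand side is $\sum(u_{(1)-}u_{(2)}n\otimes_A mu_{(1)+})\otimes_U 1=F\big(\sum u_{(1)+}\otimes_\Aopp u_{(1)-}u_{(2)}\big)$ for the (easily checked well-defined) map $F\colon{}_\blact U\otimes_\Aopp U_\ract\to(N\otimes_A M)\otimes_U A$, $p\otimes_\Aopp q\mapsto(qn\otimes_A mp)\otimes_U 1$; by \rmref{pmb} the argument of $F$ is $\beta^{-1}(\Deltaell(u))$, which equals $u\otimes_\Aopp 1$ by \rmref{Sch2}, so the left-hand side is $F(u\otimes_\Aopp 1)=(n\otimes_A mu)\otimes_U 1$, and comparing the two sides yields the identity. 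The hard part is exactly this last point: the routine bookkeeping — well-definedness over the relevant base algebras of the right $U$-action on $N\otimes_A M$, of $F$ over $A^{\rm op}$, and of $x\otimes_A y\mapsto\big((yn\otimes_A m)\cdot x\big)\otimes_U 1$ over $A$, all from \rmref{Sch4}--\rmref{Sch5} — is harmless, but the leverage that actually forces $\psi$ to be well defined is the bijectivity of the Galois map via \rmref{Sch2}.
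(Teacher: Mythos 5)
Your proof is correct, but it is not the route the paper takes. The paper's entire proof consists of rewriting $(N\otimes_A M)\otimes_U A$ as $A\otimes_\Uop (N\otimes_A M)$ and invoking the natural isomorphism $P\otimes_\Uop(N\otimes_A M)\simeq (P\otimes_A N)\otimes_\Uop M$ of \cite[Lem.~3]{KowKra:DAPIACT}, specialised at $P=A$; what you have done is essentially to reprove the $P=A$ case of that external lemma from scratch. Your localisation of the actual content is accurate: the only nontrivial point is the balancedness identity $(un\otimes_A m)\otimes_U 1=(n\otimes_A mu)\otimes_U 1$, which you correctly extract from \rmref{Sch2} via the auxiliary map $F$ and the termwise balancing relation, with \rmref{Sch47} and \rmref{Sch5} handling the $\phi$-direction and the identification $A\simeq U/\Ker\eps$ as left $U$-modules. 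This is precisely the circle of translation-map identities the paper itself deploys --- in the unnamed lemma following Lemma~\ref{eli} (which uses \rmref{Sch1} where you use \rmref{Sch2}) and in Proposition~\ref{excellentesoiree} --- to establish the companion isomorphism $(U\otimes_A N)\otimes_\Uop M\simeq N\otimes_A M$. So your argument buys self-containedness and an explicit formula for the isomorphism, at the cost of redoing a computation the authors chose to outsource; the paper's citation buys brevity and gets naturality (in all arguments, including $P$) for free from the quoted lemma.
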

\begin{proof}
Write first 
$A \otimes_\Uop (N \otimes_A M)$
rather than $(N \otimes_A M) \otimes_U A$,
and then apply 
the natural $k$-module isomorphism
$$
		  P \otimes_\Uop (N \otimes_A
		  M)\simeq 
		  (P \otimes_A N) \otimes_\Uop M
$$
from \cite[Lem.\ 3]{KowKra:DAPIACT} with
$P=A$. 
\end{proof}

Note that \cite[Lem.\ 3]{KowKra:DAPIACT}
applied with $P = A$, $M = \Aop$ 
yields the coinvariants in the
form used in \cite{KowPos:TCTOHA}
where they were considered 
as a functor $\umod \to \kmod$.



Applying Lemma~\ref{eli} with
$N=U^{\otimes_A \bull +1}$ 
will lead to the simpler form of the
para-cocyclic $k$-module we are going to
consider. To get there, we first remark:

\begin{lem}
Let $M \in \modu$ and $N, P \in
 \umod$. Then one has
$$
		  (un \otimes_A p) \otimes_\Uop m
		  =
		  (n \otimes_A u_-p) \otimes_\Uop mu_+
$$
for all $m \in M$, $n \in N$, and $p \in P$.
\end{lem}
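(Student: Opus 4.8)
The plan is to reduce the asserted equality to the balancing relation defining $\otimes_\Uop$, after transporting both sides across the natural isomorphism of \cite[Lem.\ 3]{KowKra:DAPIACT}. Since $N,P\in\umod$, the product $N\otimes_A P$ carries the monoidal left $U$-module structure $v\cdot(n\otimes_A p)=v_{(1)}n\otimes_A v_{(2)}p$, while $P\otimes_A M$ carries the right $U$-module structure $(p\otimes_A m)u:=u_- p\otimes_A mu_+$ recalled above. Applying \cite[Lem.\ 3]{KowKra:DAPIACT} (with $N$, $P$, $M$ in the roles of its $P$, $N$, $M$) then yields a natural $k$-linear isomorphism
$$
\Phi\colon (N\otimes_A P)\otimes_\Uop M\;\longrightarrow\;N\otimes_\Uop(P\otimes_A M),\qquad (n\otimes_A p)\otimes_\Uop m\;\longmapsto\;n\otimes_\Uop(p\otimes_A m).
$$

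First I would apply $\Phi$ to the left-hand side of the claim, obtaining at once $\Phi\big((un\otimes_A p)\otimes_\Uop m\big)=un\otimes_\Uop(p\otimes_A m)$. Next I would apply $\Phi$ to the right-hand side, keeping the two legs of the translation map $u\mapsto u_+\otimes_\Aopp u_-$ correlated throughout: this gives $\Phi\big((n\otimes_A u_- p)\otimes_\Uop mu_+\big)=n\otimes_\Uop(u_- p\otimes_A mu_+)=n\otimes_\Uop\big((p\otimes_A m)u\big)$. Finally, the balancing relation of $\otimes_\Uop$, namely $n\otimes_\Uop(xu)=(un)\otimes_\Uop x$ for $n\in N$, $x\in P\otimes_A M$ and $u\in U$, rewrites the last expression as $(un)\otimes_\Uop(p\otimes_A m)$. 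Thus $\Phi$ maps both sides of the Lemma to the same element, and since $\Phi$ is injective we are done.

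One can also argue directly inside $(N\otimes_A P)\otimes_\Uop M$. Combining the monoidal $U$-action on $N\otimes_A P$ with the $\otimes_\Uop$-balancing gives $(v_{(1)}n\otimes_A v_{(2)}p)\otimes_\Uop m=(n\otimes_A p)\otimes_\Uop mv$ for every $v\in U$; specialising $v=u_+$ and replacing $p$ by $u_- p$ shows that the right-hand side of the Lemma equals $(u_{+(1)}n\otimes_A u_{+(2)}u_- p)\otimes_\Uop m$, and then \rmref{Sch1}, pushed through the well-defined $A$-balanced map $U_\ract\otimes_A{}_\lact U\to N\otimes_A P$, $v\otimes_A w\mapsto vn\otimes_A wp$, collapses $u_{+(1)}\otimes_A u_{+(2)}u_-$ to $u\otimes_A 1$ and delivers the left-hand side. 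Either way the argument is pure bookkeeping; the only points that demand care --- and hence the main, rather modest, obstacle --- are using the correct module structures on each tensor factor (the monoidal one on $N\otimes_A P$ versus the $u_-(-)\otimes(-)u_+$ one on $P\otimes_A M$) and not breaking the correlation between $u_+$ and $u_-$ when \rmref{Sch1} is invoked.
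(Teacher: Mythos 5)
Your proposal is correct, and your second (``direct'') argument is precisely the paper's proof read in the opposite direction: the paper starts from $(un\otimes_A p)\otimes_\Uop m$, applies \rmref{Sch1} to rewrite it as $(u_{+(1)}n\otimes_A u_{+(2)}u_-p)\otimes_\Uop m = u_+(n\otimes_A u_-p)\otimes_\Uop m$ via the monoidal structure on $\umod$, and then moves $u_+$ across $\otimes_\Uop$. Your first argument via the isomorphism of \cite[Lem.\ 3]{KowKra:DAPIACT} is only a mild repackaging of the same three ingredients, so there is no essential difference in approach.
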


\begin{proof}
One has
\begin{equation*}
\begin{array}{rll}
&		  (un \otimes_A p) \otimes_\Uop m \\
= & (u_{+(1)}n \otimes_A
		  u_{+(2)}u_- p) 
		  \otimes_\Uop m &\mbox{by \rmref{Sch1},} \\
=& u_+(n \otimes_A u_-p) \otimes_\Uop m
		  & \mbox{by the monoidal structure in $\umod$,} \\
=& (n \otimes_A u_-p) \otimes_\Uop mu_+.& 
\end{array}
\end{equation*}
The well-definedness of the first
 operation follows from
 \rmref{Sch5} using \rmref{brot} and
 \rmref{salz}. 
\end{proof}

Using this we now obtain:

\begin{prop}
\label{excellentesoiree}
For $M \in \modu$ and $N \in \umod$,
 there is a canonical isomorphism of
 $k$-modules 
\begin{equation}
\label{doppellaeufer}
\phi: (U \otimes_A N) \otimes_\Uop M
		  \stackrel{\simeq}{\longrightarrow} N
		  \otimes_A M, 
\end{equation}
given by 
\begin{equation}\label{allesvielsimpleralsgedacht}
		  (u \otimes_A n) \otimes_\Uop m
			\mapsto 
		  u_-n \otimes_A mu_+.
\end{equation}
\end{prop}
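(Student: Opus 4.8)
The plan is to realise $\phi$ as a composite of two canonical natural isomorphisms, which makes both well-definedness and bijectivity automatic. First I would invoke the natural $k$-linear isomorphism
$$
\psi\colon\ P\otimes_\Uop(N\otimes_A M)\ \longrightarrow\ (P\otimes_A N)\otimes_\Uop M,\qquad
p\otimes_\Uop(n\otimes_A m)\mapsto(p\otimes_A n)\otimes_\Uop m,
$$
of \cite[Lem.\ 3]{KowKra:DAPIACT} --- the very one already used in the proof of Lemma~\ref{eli} --- now with $P=U$ the left regular $U$-module; here $P\otimes_A N$ carries the $\Deltaell$-diagonal $U$-action coming from the monoidal structure of $\umod$, and $N\otimes_A M$ the right $U$-module structure $(n\otimes_A m)u=u_-n\otimes_A mu_+$ recalled just before Lemma~\ref{eli}. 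Secondly, $U$ equipped with left multiplication, viewed as a right $\Uop$-module, is free of rank one, so that for any right $U$-module $Y$ the action map furnishes a natural isomorphism $c\colon U\otimes_\Uop Y\to Y$, $u\otimes_\Uop y\mapsto yu$, with inverse $y\mapsto 1\otimes_\Uop y$. Applying this with $Y=N\otimes_A M$ and setting $\phi:=c\circ\psi^{-1}$ then yields a natural $k$-module isomorphism $(U\otimes_A N)\otimes_\Uop M\to N\otimes_A M$, and unwinding the two steps, $(u\otimes_A n)\otimes_\Uop m$ is sent first to $u\otimes_\Uop(n\otimes_A m)$ and then to $(n\otimes_A m)u=u_-n\otimes_A mu_+$, so the composite is exactly the map \rmref{allesvielsimpleralsgedacht}.

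I expect the only point needing care to be the bookkeeping of the various $A$- and $\Uop$-module structures: one must confirm that the $U$-action on $U\otimes_A N$ for which \cite[Lem.\ 3]{KowKra:DAPIACT} is stated is the $\Deltaell$-diagonal one, and that the right $U$-action on $N\otimes_A M$ is the one recalled above --- which is precisely where the left Hopf hypothesis enters, through the translation map $u\mapsto u_+\otimes_\Aopp u_-$. Should one prefer to avoid \cite[Lem.\ 3]{KowKra:DAPIACT}, a self-contained alternative is to verify directly that $\phi$ annihilates the defining relations of $(U\otimes_A N)\otimes_\Uop M$: the $\otimes_A$-relation collapses, via \rmref{Sch4} and \rmref{Sch5}, to the identity $\tl(a)_+\otimes_\Aopp\tl(a)_-=1\otimes_\Aopp\sll(a)$, while the $\otimes_\Uop$-relation reduces to \rmref{Sch4} together with \rmref{Sch2}; one then checks that $\iota\colon n\otimes_A m\mapsto(1\otimes_A n)\otimes_\Uop m$ satisfies $\iota\circ\phi=\id$ by the lemma immediately preceding this proposition, whereas $\phi\circ\iota=\id$ is immediate from $1_+\otimes_\Aopp 1_-=1\otimes_\Aopp 1$. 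In that route the $\otimes_\Uop$-relation for $\phi$ is the genuine obstacle, since one must absorb the $A^{\mathrm{op}}$-tensor produced by \rmref{Sch2} correctly against both module actions.
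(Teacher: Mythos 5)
Your argument is correct, and your fallback route is in fact the paper's own proof: the authors simply observe that $\iota\colon n\otimes_A m\mapsto(1\otimes_A n)\otimes_\Uop m$ is obviously a right inverse of $\phi$ (via $1_+\otimes_\Aopp 1_-=1\otimes_\Aopp 1$) and a left inverse by the unnumbered lemma immediately preceding the proposition; you add the explicit verification that $\phi$ kills the $\otimes_A$- and $\otimes_\Uop$-relations, which the paper leaves implicit (it is subsumed in the well-definedness remark inside the proof of that preceding lemma), and your reductions to \rmref{Sch5}/\rmref{Sch4} and to \rmref{Sch2}/\rmref{Sch4} respectively are the right ones. Your primary route is a genuine, if modest, repackaging: instead of proving the bespoke identity $(un\otimes_A p)\otimes_\Uop m=(n\otimes_A u_-p)\otimes_\Uop mu_+$, you evaluate the natural isomorphism $P\otimes_\Uop(N\otimes_A M)\simeq(P\otimes_A N)\otimes_\Uop M$ of \cite[Lem.\ 3]{KowKra:DAPIACT} at $P=U$ rather than at $P=A$ (as the paper does in Lemma~\plref{eli}) and compose with the canonical isomorphism $U\otimes_\Uop Y\simeq Y$ for the rank-one free module; this buys well-definedness, bijectivity and naturality for free, at the price of relying on the cited lemma being stated naturally in $P\in\umod$ -- which the paper's own use of it in Lemma~\plref{eli} already presupposes, so the dependence is harmless. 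The unwinding of the composite to \rmref{allesvielsimpleralsgedacht} is correct, one only has to keep track, as you note, that the right $U$-action on $N\otimes_A M$ is $(n\otimes_A m)u=u_-n\otimes_A mu_+$ and that $U\otimes_A N$ carries the $\Deltaell$-diagonal action.
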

\begin{proof}
The map $n \otimes_A m \mapsto (1
 \otimes_A n) \otimes_\Uop m$  
is obviously a right inverse to
 \rmref{allesvielsimpleralsgedacht}, and
 by the preceding lemma it is also a
 left inverse. 
\end{proof}

In particular, this yields an isomorphism
\begin{equation}\label{corelli}
		  \phi : U^{\otimes_A \bull+1} 
		  \otimes_\Uop M \rightarrow 	 
		  U^{\otimes_A \bull} \otimes_A M  =: C^\bull(U,M), 
\end{equation} 
and the latter will be the ultimate object of study. 


\subsection{Cyclic cohomology with
  coefficients for left Hopf algebroids} 
Now we ask whether the para-cocyclic
structure of $\boehmcomplex^\bull(U,M)$ 
descends to $C^\bull(U,M)$.
%
%
%
%
This is answered by a left
Hopf algebroid left comodule and right
module version of \cite[Prop.\
2.19]{BoeSte:CCOBAAVC}, which generalises 
Proposition 5.2.1 in
\cite{Kow:HAATCT}:

\begin{prop}
\label{fratellanza}
If $M$ is an anti Yetter-Drinfel'd
 module as in Definition \ref{SAYD},  
the operators  $(\gd'_\bull, \gs'_\bull,
 \tau'_\bull)$ on $\boehmcomplex^\bull(U,M)$ from
 \rmref{duracell}  
descend   
to well-defined operators on
 $U^{\otimes_A \bull+1} \otimes_\Uop M$. 
\end{prop}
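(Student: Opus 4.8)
The plan is to compare the three families of operators from \rmref{duracell} with the defining relations of the quotient $B^\bull(U,M)\twoheadrightarrow U^{\otimes_A\bull+1}\otimes_\Uop M$. By Lemma~\ref{nepaseffacer} these operators are already well defined on $B^\bull(U,M)=U^{\otimes_A\bull+1}\otimes_\Aee M$, so the whole content is that they are compatible with the additional identification
$$(u\cdot w)\otimes_\Aee m=w\otimes_\Aee mu,\qquad u\in U,$$
where $u\cdot w$ denotes the left $U$-action on $U^{\otimes_A\bull+1}$ coming from the monoidal structure of $\umod$ (so $u\cdot(u^0\otimes_A\cdots\otimes_A u^n)=u_{(1)}u^0\otimes_A\cdots\otimes_A u_{(n+1)}u^n$) and $mu$ is the given right $U$-action; note that \rmref{campanilla} is exactly what makes this quotient well defined in the first place, as it forces the two $\Ae$-module structures on $M$ to agree. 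Writing $J^\bull\subseteq B^\bull(U,M)$ for the $k$-span of the elements $(u\cdot w)\otimes_\Aee m-w\otimes_\Aee mu$, it suffices by $k$-linearity to check that each of $\gd'_i,\gs'_i,\tau'_n$ maps every such generator back into $J^\bull$ in the appropriate degree; throughout one is free to use the $\Ae$-balancing of $B^\bull(U,M)$ and the Takeuchi conditions \rmref{maotsetung}, \rmref{douceuretresistance} for $\gD_\emme$.

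For the cofaces $\gd'_0,\dots,\gd'_n$ and the codegeneracies $\gs'_0,\dots,\gs'_{n-1}$ this is routine and needs nothing about the $U$-action on $M$ beyond \rmref{campanilla}: these operators leave the $M$-slot untouched, and since $\Deltaell$ is an $\Ae$-ring map (hence multiplicative) and coassociative, $\gd'_i$ intertwines the diagonal $U$-actions on $U^{\otimes_A\bull+1}$ and $U^{\otimes_A\bull+2}$; likewise the counit axioms of a left bialgebroid, together with $\eps(uv)=\eps(u\bract\eps(v))=\eps(\eps(v)\blact u)$, show that $\gs'_i$ intertwines the diagonal actions on $U^{\otimes_A\bull+1}$ and $U^{\otimes_A\bull}$. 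Hence each of these operators sends the generator $(u\cdot w)\otimes_\Aee m-w\otimes_\Aee mu$ to $\pm\bigl((u\cdot w')\otimes_\Aee m-w'\otimes_\Aee mu\bigr)$ with $w'$ the image of $w$, which lies in $J^\bull$.

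The substantial case is the para-cocyclic operator $\tau'_n$ (and, for the same reason, the last coface $\gd'_{n+1}$), the operators that bring in the coaction $\gD_\emme$, and here the anti Yetter-Drinfel'd identity \rmref{huhomezone} is essential. Applying $\tau'_n$ to $w\otimes_\Aee mu$ one first rewrites $(mu)_{(-1)}\otimes_A(mu)_{(0)}$ by \rmref{huhomezone}; one then uses the quotient relation ``backwards'' to move the extra factor $u_{+(2)}$ off the $M$-slot, at the cost of acting on the remaining tensor legs by $(u_{+(2)})_{(1)},(u_{+(2)})_{(2)},\dots$; finally the translation-map identities \rmref{Sch1}, \rmref{Sch4}, \rmref{Sch47}, \rmref{Sch5}, together with the Takeuchi conditions \rmref{Sch3} and \rmref{douceuretresistance}, reorganise the outcome into $\tau'_n$ applied to $(u\cdot w)\otimes_\Aee m$, so that the two terms of the generator cancel again modulo $J^\bull$. (In degree one the model identity is $\sum u_{+(1)}\otimes_A u_{+(2)}\otimes u_{+(3)}u_-=\sum u_{(1)}\otimes_A u_{(2)}\otimes 1$, obtained from \rmref{Sch1} by applying $\Deltaell$ to the first leg.) I expect this reconciliation --- tracking simultaneously the iterated coproduct $\Deltaell^{(n)}(u)$, the coaction $\gD_\emme$, the translation map $u_+\otimes_\Aopp u_-$, and the several Takeuchi balancings --- to be the only real obstacle; it is the left Hopf algebroid, left comodule and right module version of \cite[Prop.~2.19]{BoeSte:CCOBAAVC} and of Proposition~5.2.1 in \cite{Kow:HAATCT}. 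Once all operators are known to descend, the para-cocyclic relations \rmref{belleville} for the descended operators are automatic, since the quotient map is a surjection intertwining them with the original operators, which satisfy \rmref{belleville} by Lemma~\ref{nepaseffacer}.
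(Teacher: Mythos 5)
Your proposal follows essentially the same route as the paper: one checks that the operators are $\Uop$-balanced, the cofaces $\gd'_0,\dots,\gd'_n$ and codegeneracies being routine via multiplicativity of $\Deltaell$ and the counit identities, while the only substantial case is the one involving the coaction, which the paper handles for $\tau'_n$ by exactly the chain of identities you name (counitality, \rmref{Sch47}, \rmref{Sch38} with higher coassociativity, moving $v_{+(2)}$ across $\otimes_\Uop$, and finally \rmref{huhomezone}). You leave that central computation as an outline rather than writing it out, but the outline matches the paper's six-line calculation step for step (read in the opposite direction), so the argument is correct.
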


\begin{proof}
One needs to prove that the operators
 $(\gd'_\bull, \gs'_\bull, \tau'_\bull)$
 are $\Uop$-balanced,  
i.e., that one has for example 
$$
		  \tau'_n(u^0 \otimes_\ahha \cdots
		  \otimes_\ahha u^n \otimes_\Uop mv)  
		  = 
		  \tau'_n(v_{(1)} u^0
		   \otimes_\ahha \cdots \otimes_\ahha
		  v_{(n+1)} u^n \otimes_\Uop m) 
$$
for any $v \in U$. This is shown by
 expressing the right hand side as 
\begin{equation*}
\begin{split}
v_{(2)} & u^1 \otimes_\ahha \cdots \otimes_\ahha v_{(n+1)} u^n \otimes_\ahha m_{(-1)} v_{(1)} u^0 \otimes_\Uop m_{(0)} \\
&= v_{(2)} u^1 \otimes_\ahha \cdots \otimes_\ahha v_{(n+1)} u^n \otimes_\ahha \sll\eps(v_{(n+2)}) m_{(-1)} v_{(1)} u^0 \otimes_\Uop m_{(0)} \\
&= v_{(2)} u^1 \otimes_\ahha \cdots \otimes_\ahha v_{(n+1)} u^n \otimes_\ahha v_{{(n+2)}+} v_{{(n+2)}-} m_{(-1)} v_{(1)} u^0 \otimes_\Uop m_{(0)} \\
&= v_{+(2)} u^1 \otimes_\ahha \cdots \otimes_\ahha v_{+(n+1)} u^n \otimes_\ahha v_{+(n+2)} v_{-} m_{(-1)} v_{+(1)} u^0 \otimes_\Uop m_{(0)} \\ 
&= u^1 \otimes_\ahha \cdots \otimes_\ahha u^n \otimes_\ahha v_{-} m_{(-1)} v_{+(1)} u^0 \otimes_\Uop m_{(0)}v_{+(2)} \\ 
&= u^1 \otimes_\ahha \cdots
 \otimes_\ahha u^n \otimes_\ahha (mv)_{(-1)}
 u^0 \otimes_\Uop (mv)_{(0)},   
\end{split}
\end{equation*}
which is the left hand side.
Here we used the counital identities of
 the left coproduct in the second line,
 \rmref{Sch47} in the third line,
 \rmref{Sch38} combined with (higher)
 coassociativity in the fourth line, and
finally the anti Yetter-Drinfel'd condition
 \rmref{huhomezone}.  
Similar calculations can be made for the
 cofaces and codegeneracies.  
\end{proof}

We denote the resulting para-cocyclic
structure on 
$C^\bull(U,M)$ by 
\begin{equation}\begin{array}{rcl}\label{figc}
		  \gd_i 
&:=& 
		  \phi \circ  \bar\gd'_i \circ \phi ^{-1}, \\
		  \gs_i 
&:=& \phi \circ \bar\gs'_i \circ \phi ^{-1}, \\
		  \tau_i 
&:=& \phi \circ \bar\tau'_i \circ \phi ^{-1},
\end{array}
\end{equation}
where $\phi$ is the map from
\rmref{corelli} and $\bar \delta_i',\bar
\sigma'_j,\bar \tau'_n$ 
are the para-cocyclic operators on
$U^{\otimes_A \bull+1} \otimes_\Uop M$
that descend from $\boehmcomplex^\bull(U,M)$.

A short computation yields the explicit expressions 
given in Theorem~\ref{main} in the introduction:
\begin{equation}
\!\! \begin{array}{rll}
\label{anightinpyongyang}
\gd_i(z \otimes_\ahha m) \!\!\!\!&= \left\{\!\!\!
\begin{array}{l} 1
\otimes_\ahha u^1 \otimes_\ahha \cdots
 \otimes_\ahha u^n \otimes_\ahha m  
\\ 
u^1 \otimes_\ahha \cdots \otimes_\ahha \Deltaell (u^i) \otimes_\ahha \cdots
 \otimes_\ahha u^n \otimes_\ahha m
\\
u^1 \otimes_\ahha \cdots \otimes_\ahha u^n \otimes_\ahha m_{(-1)} \otimes_\ahha m_{(0)} 
\end{array}\right. \!\!\!\!\!\!\!\! 
& \!\! \hspace*{-2cm}  \begin{array}{l} \mbox{if} \ i=0, \\ \mbox{if} \
  1 \leq i \leq n, \\ \mbox{if} \ i = n + 1,  \end{array} \\
\gd_j(m) \!\!\!\! &= \left\{ \!\!\!
\begin{array}{l}
		  1
		  \otimes_\ahha m  \quad
\\
m_{(-1)} \otimes_\ahha m_{(0)}  \quad 
\end{array}\right. & \!\! \hspace*{-2cm} 
\begin{array}{l} \mbox{if} \ j=0, \\ \mbox{if} \
  j = 1 ,  \end{array} \\
\gs_i(z \otimes_\ahha m) \!\!\!\! 
&= u^1 \otimes_\ahha \cdots \otimes_\ahha
\eps (u^{i+1}) \otimes_\ahha \cdots \otimes_\ahha u^n \otimes_\ahha m & \! \,
\hspace*{1pt} \hspace*{-2cm}  0 \leq i \leq n-1,
\\
\tau_n(z \otimes_\ahha m) \!\!\!\! 
&= u^1_{-(1)}u^2 \otimes_\ahha \cdots \otimes_\ahha u^1_{-(n-1)}u^n\otimes_\ahha u^1_{-(n)}m_{(-1)} \otimes_\ahha m_{(0)}u^1_+, & 
\end{array}
\end{equation}
where we abbreviate $z:=u^1 \otimes_\ahha
\cdots \otimes_\ahha u^n$.

In this form, the well-definedness and the well-definedness over the Sweedler presentations of these
operators can be seen directly 
(using \rmref{Sch5} as well as the
Takeuchi properties of $\Deltaell$ and
$\gD_\emme$). 
Observe, however, that the condition $ma = m\sll(a)$ from \rmref{campanilla} 
is {\em not} needed to make 
the operators \rmref{anightinpyongyang} well-defined and
well-defined over the Sweedler
presentation but only to give a sense to
the above quotienting process.

It is less obvious
that the stability condition on $M$
implies cyclicity. This is, however,
immediate from the presentation of
$C^\bull(U,M)$ as a quotient of
$\boehmcomplex^\bull(U,M)$:

\begin{thm}
\label{bufalinadachteich}
If $U$ is a left Hopf algebroid and 
$M$ is a stable anti Yetter-Drinfel'd module, 
then $(C^\bull(U,M), \gd_\bull,
 \gs_\bull, \tau_\bull)$ is a cocyclic 
$k$-module.
\end{thm}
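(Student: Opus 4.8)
The plan is to use the presentation $C^\bull(U,M)\cong U^{\otimes_A\bull+1}\otimes_\Uop M$ of \rmref{corelli} together with Proposition~\ref{fratellanza}. By that proposition the operators $(\bar\gd'_\bull,\bar\gs'_\bull,\bar\tau'_\bull)$ descend to $U^{\otimes_A\bull+1}\otimes_\Uop M$ and, transported along $\phi$, already make $C^\bull(U,M)$ a para-cocyclic $k$-module; since a cocyclic module is by definition a para-cocyclic one in which moreover $\tau_n^{n+1}=\id$, and since $\phi$ intertwines $\tau_n$ with $\bar\tau'_n$ by \rmref{figc}, the only thing left to prove is that $(\bar\tau'_n)^{n+1}=\id$ on $U^{\otimes_A n+1}\otimes_\Uop M$ for all $n$. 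I would not attempt this by iterating the explicit formula \rmref{anightinpyongyang} for $\tau_n$, which involves the translation map and composes awkwardly; instead I would compute $(\tau'_n)^{n+1}$ one level up, on $\boehmcomplex^\bull(U,M)$, and only then pass to the quotient.

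The first step is a direct computation on $\boehmcomplex^n(U,M)=U^{\otimes_A n+1}\otimes_\Aee M$ from \rmref{duracell}. Each application of $\tau'_n$ cyclically shifts the $U$-tensor legs and inserts one copy of $\gD_\emme$, so a short induction --- using coassociativity of the comodule $M$ at each stage to reassemble the accumulated coactions into one iterated coaction --- gives, writing $m_{(-n-1)}\otimes_\ahha\cdots\otimes_\ahha m_{(-1)}\otimes_\ahha m_{(0)}$ for the $(n+1)$-fold iterated left coaction of $m$,
$$(\tau'_n)^{n+1}(u^0\otimes_\ahha\cdots\otimes_\ahha u^n\otimes_\Aee m)=m_{(-n-1)}u^0\otimes_\ahha\cdots\otimes_\ahha m_{(-1)}u^n\otimes_\Aee m_{(0)}.$$
On $\boehmcomplex^\bull$ this is of course not the identity, as it must not be --- $\boehmcomplex^\bull$ is merely para-cocyclic.

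The second step is to pass to $U^{\otimes_A n+1}\otimes_\Uop M$. By coassociativity of $\gD_\emme$ the block $m_{(-n-1)}\otimes_\ahha\cdots\otimes_\ahha m_{(-1)}$ is the $n$-fold iterated coproduct applied to the $U$-component of a single coaction $\gD_\emme(m)=m'\otimes_A m''$, so the right-hand side above is exactly $m'$ acting on $u^0\otimes_\ahha\cdots\otimes_\ahha u^n$ through the diagonal left $U$-module structure $v\cdot(u^0\otimes_\ahha\cdots\otimes_\ahha u^n):=v_{(1)}u^0\otimes_\ahha\cdots\otimes_\ahha v_{(n+1)}u^n$ on $U^{\otimes_A n+1}$, with $m''$ left in the $M$-slot. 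This is precisely the $U$-action with respect to which $\otimes_\Uop$ is formed --- it is the one occurring in the balancing identity in the proof of Proposition~\ref{fratellanza}. Hence over $\Uop$ the factor $m'$ moves across the tensor product, so that
$$(\bar\tau'_n)^{n+1}(u^0\otimes_\ahha\cdots\otimes_\ahha u^n\otimes_\Uop m)=u^0\otimes_\ahha\cdots\otimes_\ahha u^n\otimes_\Uop m''m'=u^0\otimes_\ahha\cdots\otimes_\ahha u^n\otimes_\Uop m,$$
the last equality being the stability hypothesis $m''m'=m$ (that is, $m_{(0)}m_{(-1)}=m$). Thus $(\bar\tau'_n)^{n+1}=\id$, whence $\tau_n^{n+1}=\phi\circ(\bar\tau'_n)^{n+1}\circ\phi^{-1}=\id$ on $C^\bull(U,M)$; together with Proposition~\ref{fratellanza} this proves the theorem.

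The only step requiring attention is the induction carrying out the first step above: one must keep the cyclic relabelling of the $U$-legs straight and invoke coassociativity of $\gD_\emme$ at each stage to recognise the accumulated coaction factors as a single iterated coaction, and then recognise that iterated coaction, sitting in the $U$-legs, as exactly the diagonal $U$-action defining the quotient $\otimes_\Uop$. Everything else is formal; note in particular that beyond the full anti Yetter-Drinfel'd property already used in Proposition~\ref{fratellanza}, this argument consumes only stability, which is consistent with the remark that cyclicity is ``immediate'' from the quotient presentation.
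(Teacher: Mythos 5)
Your proposal is correct and follows essentially the same route as the paper: para-cocyclicity is inherited from the quotient presentation via Proposition~\ref{fratellanza}, and cyclicity is checked by computing $(\bar\tau'_n)^{n+1}(u^0\otimes_\ahha\cdots\otimes_\ahha u^n\otimes_\Uop m)=m_{(-n-1)}u^0\otimes_\ahha\cdots\otimes_\ahha m_{(-1)}u^n\otimes_\Uop m_{(0)}$, recognising this as the diagonal left $U$-action of $m_{(-1)}$, moving it across $\otimes_\Uop$, and invoking stability. This is exactly the computation in the paper's proof.
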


\begin{proof} 
By its construction, 
$(C^\bull(U,M), \gd_\bull,
 \gs_\bull, \tau_\bull)$ is a
 para-cocyclic object and as such
isomorphic to
$(U^{\otimes_A \bull+1} \otimes_\Uop
 M,\bar \gd'_\bull,\bar
 \gs'_\bull,\bar \tau'_\bull)$
obtained in
Proposition \ref{fratellanza}.
It remains to show that this quotient of 
$\boehmcomplex^\bull(U,M)$ is cocyclic if $M$ is stable:
\begin{equation*}
\begin{split}
(\bar\tau'_n)^{n+1}(u^0 \otimes_\ahha \cdots \otimes_\ahha u^n \otimes_\Uop m) 
&= m_{(-n-1)} u^0 \otimes_\ahha \cdots \otimes_\ahha m_{(-1)} u^n \otimes_\Uop m_{(0)} \\
&= m_{(-1)} \cdot \big(u^0 \otimes_\ahha \cdots \otimes_\ahha u^n \big) \otimes_\Uop m_{(0)} \\
&= u^0 \otimes_\ahha \cdots \otimes_\ahha u^n \otimes_\Uop m_{(0)}m_{(-1)}, \\
\end{split}
\end{equation*} 
where $\cdot$ denotes the
 diagonal left $U$-action via the
 left coproduct.  
\end{proof}

By the last line in the 
proof of the preceding theorem one may be tempted to think that an aYD module defines a para-cocyclic module which is cocyclic if $M$ is stable. The observation we add here is 
that for defining a para-cocyclic module the aYD property \rmref{huhomezone}, i.e.\ compatibility between $U$-action and $U$-coaction, 
is {\em not} required:

\begin{thm}
\label{jetztjetitlos}
Let $U$ be a left Hopf algebroid and $M$ a right $U$-module and left $U$-comodule, and let the respective left $A$-actions be compatible in the following sense:
\begin{equation}
\label{ziehteuchwarman}
am = a \blact m, \qquad m \in M, \ a \in A.
\end{equation}
Then $(C^\bull(U,M), \gd_\bull,
 \gs_\bull, \tau_\bull)$ is a para-cocyclic 
$k$-module.
\end{thm}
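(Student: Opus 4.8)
The plan is to verify that the explicit operators $(\gd_\bull,\gs_\bull,\tau_\bull)$ given in \rmref{anightinpyongyang}, living on $C^\bull(U,M)=U^{\otimes_A\bull}\otimes_A M$, form a para-cocyclic $k$-module, using only the weak compatibility \rmref{ziehteuchwarman} and none of the aYD conditions \rmref{campanilla}, \rmref{huhomezone}. First I would record that under \rmref{ziehteuchwarman} the right $A$-action on $M$ coming from the comodule structure via \rmref{grimm} still makes sense and all the Takeuchi identities \rmref{maotsetung}, \rmref{douceuretresistance} hold, so the operators in \rmref{anightinpyongyang} are well-defined over $A$ and over the Sweedler presentations of $\Deltaell$ and $\gD_\emme$; this was already remarked in the text immediately after \rmref{anightinpyongyang}. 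Hence the only thing left is to check the cosimplicial identities among $(\gd_\bull,\gs_\bull)$ together with the para-cocyclic relations \rmref{belleville} relating $\tau_\bull$ to the cofaces and codegeneracies.

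For the cosimplicial identities I would argue that $(\gd_\bull,\gs_\bull)$ is literally the standard cobar-type cosimplicial object attached to the $A$-coring $U$ with the comodule $M$ entering in the last slot: the faces $\gd_1,\dots,\gd_n$ apply $\Deltaell$, $\gd_0$ inserts a unit, $\gd_{n+1}$ applies the coaction $\gD_\emme$, and $\gs_i$ applies the counit $\eps$. Coassociativity of $\Deltaell$, counitality, coassociativity of $\gD_\emme$ ($(\Deltaell\otimes\id)\circ\gD_\emme=(\id\otimes\gD_\emme)\circ\gD_\emme$) and its counitality ($L_\ahha\circ(\eps\otimes\id)\circ\gD_\emme=\id$) are exactly the relations needed, and none of these involve the $U$-module structure on $M$. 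So the cosimplicial part is formal and requires no hypothesis beyond $M$ being a $U$-comodule. The one place where a check is genuinely needed is the interaction of $\tau_n$ with $\gd_0,\gd_{n+1}$ and with $\gs_0$; here $\tau_n$ does involve the translation map $u\mapsto u_+\otimes_\Aopp u_-$ and the right action $m\mapsto mu^1_+$, so the module structure enters.

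The cleanest route, and the one I expect to use, is to avoid recomputing \rmref{belleville} directly for $(\gd_\bull,\gs_\bull,\tau_\bull)$ and instead transport everything through the isomorphism $\phi$ of \rmref{corelli}: by \rmref{figc} the operators on $C^\bull(U,M)$ are conjugates of operators $\bar\gd'_\bull,\bar\gs'_\bull,\bar\tau'_\bull$ on the quotient $U^{\otimes_A\bull+1}\otimes_\Uop M$, so it suffices to show that the para-cocyclic operators $(\gd'_\bull,\gs'_\bull,\tau'_\bull)$ of Lemma~\ref{nepaseffacer} on $\boehmcomplex^\bull(U,M)$ descend to the quotient by the $\Uop$-balancing relation. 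Lemma~\ref{nepaseffacer} already gives the para-cocyclic identities \emph{upstairs} on $\boehmcomplex^\bull(U,M)$ with no hypothesis on $M$ at all; and para-cocyclicity passes to any quotient on which the operators are well-defined. So the entire content of Theorem~\ref{jetztjetitlos} reduces to: the operators $(\gd'_\bull,\gs'_\bull,\tau'_\bull)$ are $\Uop$-balanced already under \rmref{ziehteuchwarman} alone. Revisiting the computation in the proof of Proposition~\ref{fratellanza}, the chain of equalities for $\tau'_n$ uses counitality of $\Deltaell$, \rmref{Sch47}, \rmref{Sch38} with higher coassociativity, and then in the very last step the aYD identity \rmref{huhomezone}. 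The point is that \rmref{huhomezone} is used there only to \emph{recombine} $v_- m_{(-1)} v_{+(1)}\otimes_A m_{(0)}v_{+(2)}$ into $(mv)_{(-1)}\otimes_A (mv)_{(0)}$; but balancing over $\Uop$ does not require this recombination — one only needs that both sides of the putative balancing relation, when pushed into $U^{\otimes_A\bull+1}\otimes_\Uop M$, agree, and the term $m_{(0)}v_{+(2)}$ can be absorbed across the $\otimes_\Uop$ using the identity $(un\otimes_A p)\otimes_\Uop m=(n\otimes_A u_-p)\otimes_\Uop mu_+$ of the Lemma preceding Proposition~\ref{excellentesoiree}, which itself only needs \rmref{Sch1}, \rmref{Sch5} and the monoidal structure of $\umod$. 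Thus I would (i) cite Lemma~\ref{nepaseffacer} for para-cocyclicity on $\boehmcomplex^\bull(U,M)$; (ii) rerun the $\Uop$-balancing check for $\tau'_\bull$ as in Proposition~\ref{fratellanza} but stopping one line earlier and using the $\otimes_\Uop$-transfer lemma in place of \rmref{huhomezone}, and do the (easier) analogous checks for $\gd'_\bull,\gs'_\bull$; (iii) conclude that $(\bar\gd'_\bull,\bar\gs'_\bull,\bar\tau'_\bull)$ and hence, via $\phi$, $(\gd_\bull,\gs_\bull,\tau_\bull)$ form a para-cocyclic $k$-module. The main obstacle is purely bookkeeping: making sure in step (ii) that the places where Proposition~\ref{fratellanza} invoked \rmref{huhomezone} (and the places invoking \rmref{campanilla}) are genuinely dispensable once one only asks for balancedness rather than for the coaction-on-$mv$ formula, and that \rmref{ziehteuchwarman} suffices to keep the right $A$-action \rmref{grimm} and the Takeuchi identities available throughout.
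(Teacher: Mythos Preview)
Your proposed route through the quotient $U^{\otimes_A\bull+1}\otimes_\Uop M$ does not work, and the paper explicitly avoids it for exactly this reason. The descent of $\tau'_\bull$ from $\boehmcomplex^\bull(U,M)$ to the $\Uop$-quotient genuinely requires the anti Yetter--Drinfel'd condition \rmref{huhomezone}, not merely \rmref{ziehteuchwarman}. To see this concretely, take $n=0$: then $U\otimes_\Uop M\simeq M$ via $u\otimes_\Uop m\mapsto mu$, and descent of $\tau'_0$ amounts to the identity $(mv)_{(0)}(mv)_{(-1)}=m_{(0)}m_{(-1)}v$ for all $m\in M$, $v\in U$. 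This is a nontrivial compatibility between the $U$-action and the $U$-coaction on $M$ which is in no way implied by \rmref{ziehteuchwarman}. More generally, the left-hand side of the balancing relation $\tau'_n(w\otimes_\Uop mv)=\tau'_n(v\cdot w\otimes_\Uop m)$ contains the unbroken expression $(mv)_{(-1)}\otimes_A(mv)_{(0)}$; your transfer lemma only moves $U$-actions across $\otimes_\Uop$ and cannot decompose this coaction. The paper itself flags this point just after \rmref{anightinpyongyang}: the condition $ma=m\sll(a)$ ``is not needed to make the operators \rmref{anightinpyongyang} well-defined \ldots\ but only to give a sense to the above quotienting process.''

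The paper's actual proof is therefore a direct verification on $C^\bull(U,M)$ using the explicit formulas \rmref{anightinpyongyang}, with no reference to the quotient. The key observation is that almost all of the para-cocyclic identities \rmref{belleville} either involve only the coaction on $M$ (cosimplicial identities, $\tau_n\gd_i=\gd_{i-1}\tau_{n-1}$, $\tau_n\gd_0=\gd_n$, $\tau_n\gs_i=\gs_{i-1}\tau_{n+1}$ for $i\geq 1$) or, when they do produce a term $mu^1_+$ via the action, never subsequently apply the coaction to it. The \emph{single} identity in which the action on $M$ is followed by the coaction is $\tau_n\gs_0=\gs_n\tau_{n+1}^2$, and the paper checks this one by hand: the crucial step is that in $\gs_n\tau_{n+1}$ the coaction term produced by $\tau_{n+1}$ is killed by $\gs_n$ (via counitality and \rmref{ziehteuchwarman}) \emph{before} one applies $\tau_{n+1}$ the second time, so the incompatible mixing never actually occurs. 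You should replace step~(ii) of your plan by this direct computation; the rest of your outline (well-definedness, the cosimplicial part) is fine.
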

\begin{proof}
We need to check the relations in the right column in \rmref{belleville}. 
Since we do not assume that $M$ is aYD here, i.e.\ compatibility between action and coaction, 
the only relations that need to be checked 
are those that have the $U$-action on $M$ followed 
by an operation involving the $U$-coaction on $M$. 
Here, this is only $\tau_n \circ \gs_0 = \gs_n \circ \tau^2_{n+1}$, which is proven as follows: first compute 
\begin{equation*}
\begin{split}
\gs_n & \big(\tau_{n+1}(u^1 \otimes_\ahha \cdots \otimes_\ahha u^{n+1} \otimes_\ahha m)\big) \\
&= 
\gs_n\big(u^1_{-(1)}u^2 \otimes_\ahha \cdots \otimes_\ahha u^1_{-(n)} u^{n+1}\otimes_\ahha u^1_{-(n+1)}m_{(-1)} \otimes_\ahha m_{(0)}u^1_+\big) \\
&= 
u^1_{-(1)}u^2 \otimes_\ahha \cdots \otimes_\ahha t\big(\eps(u^1_{-(n+1)} m_{(-1)})\big) u^1_{-(n)} u^{n+1} \otimes_\ahha m_{(0)}u^1_+ \\
&= 
u^1_{-(1)}u^2 \otimes_\ahha \cdots \otimes_\ahha u^1_{-(n)} u^{n+1} \otimes_\ahha m_{(0)}\tl(\eps(m_{(-1)}))u^1_+ \\
&= 
u^1_{-(1)}u^2 \otimes_\ahha \cdots \otimes_\ahha u^1_{-(n)} u^{n+1} \otimes_\ahha m u^1_+, 
\end{split}
\end{equation*}
where we used the Takeuchi property \rmref{Sch3} in the fourth line and \rmref{ziehteuchwarman} together with
the comodule properties in the fifth, so that terms involving the coaction disappear.
Hence
\begin{equation*}
\begin{split}
& \gs_n \tau^2_{n+1}(u^1 \otimes_\ahha \cdots \otimes_\ahha u^{n+1} \otimes_\ahha m) \\
&= 
\gs_n\tau_{n+1}\big(u^1_{-(1)}u^2 \otimes_\ahha \cdots \otimes_\ahha u^1_{-(n)} u^{n+1}\otimes_\ahha u^1_{-(n+1)}m_{(-1)} \otimes_\ahha m_{(0)}u^1_+\big) \\
&=
(u^1_{-(1)}u^2)_{-(1)} u^1_{-(2)}u^3 \otimes_\ahha \cdots 
\otimes_\ahha (u^1_{-(1)}u^2)_{-(n)} u^1_{-(n+1)}m_{(-1)} \otimes_\ahha m_{(0)}u^1_+(u^1_{-(1)}u^2)_+  \\
&= u^2_{-(1)}\big((u^1_{-})_{(1)-} (u^1_-)_{(2)}\big)_{(1)} 
u^3 \otimes_\ahha \cdots \\
& \hspace*{2cm}
\otimes_\ahha u^2_{-(n)} \big((u^1_{-})_{(1)-} (u^1_{-})_{(2)}\big)_{(n)}  m_{(-1)} \otimes_\ahha m_{(0)}u^1_+ (u^1_{-})_{(1)+}u^2_+  \\
&= u^2_{-(1)} u^3 \otimes_\ahha \cdots 
\otimes_\ahha u^2_{(n-1)} u^{n+1} \otimes_\ahha u^2_{-(n)} m_{(-1)} \otimes_\ahha m_{(0)}\sll(\eps(u^1)) u^2_+, 
\end{split}
\end{equation*}
where in the fifth line \rmref{Sch2} was used and \rmref{Sch47} in the sixth. By \rmref{Sch5} this is now
easily seen to be equal to $\tau_n\gs_0(u^1 \otimes_\ahha \cdots \otimes_\ahha u^{n+1} \otimes_\ahha m)$.
\end{proof}

\begin{dfn}
For a right $U$-module left $U$-comodule $M$ with compatible induced left $A$-actions
over a 
left Hopf algebroid $U$, 
we denote by 
$H^\bull(U, M)$ and $HC^\bull(U,M)$ the
simplicial and cyclic cohomology groups
of $C^\bull(U,M)$. We refer to $HC^\bull(U,M)$ as to
the 
{\em  Hopf-cyclic cohomology of $U$ with
 coefficients in $M$}.
\end{dfn}

Note that the simplicial cohomology is the
ordinary $\mathrm{Cotor}$ over $U$:

\begin{prop}\label{daumenschraube}\cite{Kow:HAATCT,
 KowPos:TCTOHA} 
If $\due U {} \ract$ is flat as right
 $A$-module, then one has
$$
H^\bull(U,M) \simeq \Cotor^\bull_U(A,M).
$$
\end{prop}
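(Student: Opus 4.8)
The plan is to exhibit $(C^\bull(U,M),\gd_\bull,\gs_\bull)$ as the cosimplicial $k$-module obtained by applying the cotensor functor $A\,\square_U\,-$ to the cobar resolution of $M$ in $\ucomod$. Since $\due U {}\ract$ is flat as a right $A$-module, the functor $U\otimes_A-$ is exact on left $A$-modules, so $\ucomod$ is abelian with enough injectives and $\Cotor^\bull_U(A,M)$ is by definition the value at $M$ of the right derived functor of $A\,\square_U\,-\colon\ucomod\to\kmod$, where $A$ denotes the right $U$-comodule induced by the grouplike element $1\in U$. As a sanity check, in degree zero $A\,\square_U\,M=\{m\in M\mid\gD_\emme(m)=1\otimes_A m\}$, which is exactly $\ker(\gd_0-\gd_1)$ on $C^0(U,M)=M$ by \rmref{anightinpyongyang}.

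First I would record the augmented cobar resolution of the left $U$-comodule $M$,
$$
0\to M\stackrel{\gD_\emme}{\longrightarrow}U\otimes_A M\to U^{\otimes_A 2}\otimes_A M\to U^{\otimes_A 3}\otimes_A M\to\cdots,
$$
where $U$ carries its bimodule structure $\lact,\ract$, the term in position $n$ is $U^{\otimes_A n+1}\otimes_A M$, and the differential alternately inserts $\Deltaell$ into each of the $n+1$ copies of $U$ and applies $\gD_\emme$ to the right-hand factor, with the usual signs. Two standard facts are needed. The complex is exact, in fact contractible as a complex of $A$-modules via the homotopy $u^0\otimes\cdots\otimes m\mapsto\eps(u^0)(\cdots\otimes m)$, which works by counitality of $(U,\Deltaell,\eps)$. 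And each term $U^{\otimes_A n+1}\otimes_A M=\due U\lact\ract\otimes_A(U^{\otimes_A n}\otimes_A M)$ is a cofree left $U$-comodule, hence relative injective and, using flatness of $\due U {}\ract$, acyclic for $A\,\square_U\,-$; this homological input is precisely the one developed in \cite{Kow:HAATCT, KowPos:TCTOHA}. Consequently the cobar resolution computes $\Cotor^\bull_U(A,M)$.

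It remains to apply $A\,\square_U\,-$ and match the outcome with $C^\bull(U,M)$. The grouplike-$1$ coaction on $A$ together with the isomorphism $A\,\square_U\,(\due U\lact\ract\otimes_A X)\cong A\otimes_A X\cong X$, valid for every left $A$-module $X$ (again by flatness of $\due U {}\ract$), turns the $n$-th term $\due U\lact\ract\otimes_A(U^{\otimes_A n}\otimes_A M)$ into $U^{\otimes_A n}\otimes_A M=C^n(U,M)$, the absorbed leftmost tensor factor now playing the role of the unit $1\in U$. Tracking the cobar differential through this identification, inserting $\Deltaell$ into the absorbed factor becomes $\gd_0$ (prepending $1$), inserting $\Deltaell$ into the remaining factors becomes $\gd_1,\ldots,\gd_n$, applying $\gD_\emme$ becomes $\gd_{n+1}$, and applying $\eps$ to a factor recovers the codegeneracies $\gs_\bull$; these are exactly the operators of \rmref{anightinpyongyang}. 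Hence $(C^\bull(U,M),\gd_\bull,\gs_\bull)$ is isomorphic, as a cosimplicial $k$-module, to $A\,\square_U\,-$ of the cobar resolution, and therefore $H^\bull(U,M)\cong\Cotor^\bull_U(A,M)$.

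The bookkeeping in the last paragraph is routine; the real content lies in the homological input of the middle paragraph, namely that under the hypothesis that $\due U {}\ract$ be flat over $A$, cofree left $U$-comodules are acyclic for $A\,\square_U\,-$ and satisfy $A\,\square_U\,(\due U\lact\ract\otimes_A X)\cong X$. Both facts are available from \cite{Kow:HAATCT, KowPos:TCTOHA} and would simply be quoted; the only genuinely new point is the explicit identification of the resulting cobar cosimplicial $k$-module with the complex $C^\bull(U,M)$ of Theorem~\ref{main}.
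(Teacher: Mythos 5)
The paper gives no proof of this proposition, deferring entirely to \cite{Kow:HAATCT, KowPos:TCTOHA}, and your argument is precisely the standard one used there: identify $(C^\bull(U,M),\gd_\bull,\gs_\bull)$ with the result of applying $A\,\square_U\,-$ to the cobar resolution of $M$, using flatness of $\due U {}\ract$ to make $\ucomod$ abelian and the cofree terms acyclic. Your bookkeeping of the cofaces and the sanity check in degree zero are correct, so this is a faithful reconstruction of the cited proof rather than a new route.
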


\begin{rem}
\label{citoyen}
If $U$ is a (full) Hopf algebroid over
 base algebras $A$ and $B \simeq
 \Aop$,  
it is easy to check that $B$ fulfills
 the properties of an anti
 Yetter-Drinfel'd module with respect to
 the right $U$-action given by the right
 counit of the underlying right
 bialgebroid. This module is stable
if
the antipode
 of the Hopf algebroid is an
 involution. The operators
 \rmref{anightinpyongyang} reduce here to the
 well-known Hopf-cyclic operators for
 Hopf algebroids, cf.\
 \cite{ConMos:DCCAHASITG,
 KhaRan:PHAATCC, Kow:HAATCT,
 KowPos:TCTOHA}. For example, the cyclic
 operator reduces in such a case to 
$$
\tau_n(h^1 \otimes_\ahha \cdots
 \otimes_\ahha h^n) = (S(h^1))_{(1)} h^2
 \otimes_\ahha \cdots \otimes_\ahha
 (S(h^1))_{(n-1)}  
h^n\otimes_\ahha (S(h^1))_{(n)}.
$$
\end{rem}

\section{Hopf-Cyclic Homology with Coefficients}
\label{wirdschon}

\subsection{Cyclic homology with coefficients for left Hopf algebroids}

Let $U$ be a left Hopf algebroid 
over $A$ with structure maps as before,
and let $M$ be a left $U$-comodule 
with left coaction denoted 
$\gD_\emme : m \mapsto m_{(-1)} \otimes_A m_{(0)}$ with
underlying left $A$-action $(a, m)
\mapsto am$,  
and simultaneously a right $U$-module
with right action denoted $(m, u)
\mapsto mu$, subject to the compatibility condition
\rmref{campanilla} with respect the two
induced $\Ae$-module structures.

Now define
$$
		  C_\bull(U,M) := M \otimes_\Aopp  
		  (\due U \blact \ract)^{\otimes_\Aopp  \bull},
$$ 
where the tensor product is formed as in
\rmref{tata}.
On $C_\bull(U,M)$, define the following
operators, abbreviating $x:=u^1
\otimes_\Aopp \cdots \otimes_\Aopp u^n$: 
\begin{equation}
\!\!\!
\begin{array}{rcll}
\label{adualnightinpyongyang}
d_i(m \otimes_\Aopp x)  &\!\!\!\!\! =& \!\!\!\!\!
\left\{ \!\!\!
\begin{array}{l}
m \otimes_\Aopp u^1  \otimes_\Aopp   \cdots   \otimes_\Aopp   \eps(u^n) \blact u^{n-1}
\\
m \otimes_\Aopp \cdots \otimes_\Aopp  u^{n-i} u^{n-i+1}
 \otimes_\Aopp  \cdots 
\\
mu^1 \otimes_\Aopp u^2  \otimes_\Aopp   \cdots    \otimes_\Aopp  
u^n 
\end{array}\right.  & \!\!\!\!\!\!\!\!\!\!\!\! \,  \begin{array}{l} \mbox{if} \ i \!=\! 0, \\ \mbox{if} \ 1
\!  \leq \! i \!\leq\! n-1, \\ \mbox{if} \ i \! = \! n, \end{array} \\
s_i(m \otimes_\Aopp x) &\!\!\!\!\! =&\!\!\!\!\!  \left\{ \!\!\!
\begin{array}{l} m  \otimes_\Aopp   u^1  
\otimes_\Aopp   \cdots   \otimes_\Aopp
 u^n  \otimes_\Aopp 
		  1
\\
m \otimes_\Aopp \cdots \otimes_\Aopp   u^{n-i} 
\otimes_\Aopp   1
\otimes_\Aopp   u^{n-i+1}  \otimes_\Aopp
 \cdots  
\\
m \otimes_\Aopp 1
\otimes_\Aopp u^1  \otimes_\Aopp   \cdots    \otimes_\Aopp  u^n 
\end{array}\right.   & \!\!\!\!\!\!\!\!\!\!\!  \begin{array}{l} 
\mbox{if} \ i\!=\!0, \\ 
\mbox{if} \ 1 \!\leq\! i \!\leq\! n-1, \\  \mbox{if} \ i\! = \!n, \end{array} \\
t_n(m \otimes_\Aopp x) 
&\!\!\!\!\!=&\!\!\!\!\! 
m_{(0)} u^1_+ \otimes_\Aopp u^2_+ \otimes_\Aopp  \cdots  \otimes_\Aopp u^n_+ \otimes_\Aopp u^n_- \cdots u^1_- m_{(-1)}.  
& \\
\end{array}
\end{equation}
Elements of degree zero (i.e.\ of $M$)
are mapped to zero by the face maps, 
$d_0(m) = 0$ for all $m \in
M$. Well-definedness and
well-definedness over the various
Sweedler presentations follows from
\rmref{Sch3}, \rmref{Sch5},
\rmref{douceuretresistance}, and
\rmref{maotsetung}. 
Similarly as in the cohomology case, 
these operators still make sense if one drops the condition $ma = ms(a)$ from the axiom \rmref{campanilla} as well as the aYD condition 
\rmref{huhomezone}. 

As one might expect, we will obtain dually to Theorems~\ref{jetztjetitlos}~\&~\ref{bufalinadachteich}: 

\begin{thm}
\label{trocadero}
Let $U$ be a left Hopf algebroid.
\begin{enumerate}
\item
If $M$ is a right $U$-module and left $U$-comodule with respective left $A$-actions compatible as in \rmref{ziehteuchwarman},
then $(C_\bull(U,M), d_\bull,
 s_\bull, t_\bull)$ is a para-cyclic 
$k$-module.
\item
If $M$ is even a stable anti Yetter-Drinfel'd module,
then $(C_\bull(U,M), d_\bull, s_\bull,
 t_\bull)$ is a cyclic  
$k$-module.
\end{enumerate}
\end{thm}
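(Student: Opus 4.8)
The plan is to verify the two claims by a direct check that parallels the proofs of Theorems~\ref{jetztjetitlos} and \ref{bufalinadachteich}; this parallel is conceptually forced, since $(C_\bull(U,M),d_\bull,s_\bull,t_\bull)$ will be exhibited in Section~\ref{wirdschon} as the image of the para-cocyclic module $(C^\bull(U,M),\gd_\bull,\gs_\bull,\tau_\bull)$ of Theorem~\ref{main} under a generalised cyclic duality that carries para-cocyclic objects to para-cyclic ones and cocyclic objects to cyclic ones. For (i) one checks the simplicial identities among $d_\bull$ and $s_\bull$ together with the relations in the left column of \rmref{belleville}; for (ii) it then suffices, since $t_n^{n+1}$ automatically commutes with all faces and degeneracies, to compute $t_n^{n+1}$ on a generator and see that it is the identity. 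Well-definedness over the Sweedler presentations and over the balanced tensor products is the one recorded just after \rmref{adualnightinpyongyang}, so throughout one works with representatives.

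First I would treat the relations that do not mix the action and coaction on $M$. The simplicial identities among $d_\bull, s_\bull$ away from the outer slot reduce to coassociativity and counitality of $\Deltaell$ and to associativity and unitality of the right $U$-action, exactly as for the bar complex; the boundary operators $d_0$ and $s_n$, which act on the module component, use in addition the counit axioms of the bialgebroid, \rmref{Sch5}, and \rmref{ziehteuchwarman}. In the mixed relations $d_i\circ t_n = t_{n-1}\circ d_{i-1}$ and $s_i\circ t_n = t_{n+1}\circ s_{i-1}$ for $1\le i\le n$ one carries the translation-map legs $u^j_+$ through an interior multiplication or insertion; the multiplicativity \rmref{Sch4} of the translation map and the exchange identities \rmref{Sch38}, \rmref{Sch37}, combined with \rmref{Sch47}--\rmref{Sch48}, show that these legs and the trailing factor $u^n_-\cdots u^1_- m_{(-1)}$ are distributed compatibly with $\Deltaell$. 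These steps are bookkeeping-heavy but otherwise routine, and do not use the aYD property.

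The main obstacle is the pair of wrap-around relations involving $t_n$ at position $0$ --- chiefly $s_0\circ t_n = t_{n+1}^2\circ s_n$, and analogously $d_0\circ t_n = d_n$ --- which are precisely the relations in which the action and coaction on $M$ genuinely interact. They are the homology counterparts of the identity $\tau_n\circ\gs_0 = \gs_n\circ\tau_{n+1}^2$ settled in the proof of Theorem~\ref{jetztjetitlos}, and I expect them to be handled by the same ingredients: after iterating $t_{n+1}$ one is confronted with two nested translation maps on the first leg, which are untangled by \rmref{Sch2}; the factor $u^1_+u^1_-$ then collapses to $\sll(\eps(u^1))$ by \rmref{Sch47}; and the counit axioms for $\gD_\emme$ together with \rmref{ziehteuchwarman} cause all terms involving the coaction on $M$ to disappear, whereupon \rmref{Sch5} (and \rmref{Sch3}) identify the result with $s_0 t_n(m\otimes_\Aopp x)$. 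Alternatively, one simply transports Theorem~\ref{jetztjetitlos} along the cyclic-duality functor, which explains why (i) holds under the weak compatibility \rmref{ziehteuchwarman} alone and does not require the full axiom \rmref{campanilla} or the aYD condition \rmref{huhomezone}.

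For (ii), with $M$ stable anti Yetter-Drinfel'd, I would compute $t_n^{n+1}$ directly. Iterating $t_n$ feeds each $u^j$ through the translation map and each coaction $\gD_\emme$ applied to a product $mu$ through the aYD formula \rmref{huhomezone}; repeated use of \rmref{Sch4}, \rmref{Sch47} and \rmref{Sch48} lets all the legs $u^j_\pm$ recombine, so that $t_n^{n+1}$ acts as the identity up to replacing the module component $m$ by $m_{(0)}m_{(-1)}$, which equals $m$ by the stability hypothesis. This is the homology analogue of the short computation proving Theorem~\ref{bufalinadachteich}. Equivalently and more conceptually, (ii) is the cyclic-dual of Theorem~\ref{bufalinadachteich}, since the generalised cyclic duality identifies $C_\bull(U,M)$ with the cyclic dual of $C^\bull(U,M)$ and restricts to an equivalence between cocyclic and cyclic objects.
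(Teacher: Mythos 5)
Your proposal is correct, and your ``alternative'' is in fact the paper's actual proof: after setting up the generalised cyclic duality (the definition \rmref{moulesfrites} of the cyclic dual, the lemma that the cyclic dual of \emph{any} para-cocyclic $k$-module is para-cyclic, and the Hopf--Galois identification of $C_\bull(U,M)$ with the cyclic dual of $C^\bull(U,M)$ from Lemma \ref{hopf-galois} and the lemma following it), the paper disposes of Theorem \ref{trocadero} in a single line as a corollary of Theorems \ref{jetztjetitlos} and \ref{bufalinadachteich}. Your primary route --- a direct verification of the para-cyclic relations on the formulas \rmref{adualnightinpyongyang} --- is viable and correctly isolates where the $U$-action and $U$-coaction genuinely interact (the wrap-around relations and, for cyclicity, $t_n^{n+1}=\id$) from the routine bookkeeping that only uses \rmref{Sch1}--\rmref{Sch5}; note, though, that under the duality the relation $d_0\circ t_n=d_n$ holds by the very definition $d_n:=\gs_{n-1}\circ\tau_n$, and $s_0\circ t_n=t_{n+1}^2\circ s_n$ is a formal consequence of the para-cocyclic relations, so the only genuinely hard identity has already been paid for in the proof of Theorem \ref{jetztjetitlos}. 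The main thing the duality approach buys is part (ii): computing $t_n^{n+1}$ directly on $C_\bull(U,M)$ means iterating the translation map $n+1$ times and repeatedly invoking \rmref{huhomezone}, which is substantially messier than the cohomological computation in Theorem \ref{bufalinadachteich}, where $(\bar\tau'_n)^{n+1}$ on the quotient of $\boehmcomplex^\bull(U,M)$ collapses immediately to the diagonal action of $m_{(-1)}$ and stability finishes the argument. So if you do carry out the direct check, I would still transport that one identity through the Hopf--Galois map rather than verifying it by hand.
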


We will prove this below 
by presenting $C_\bull(U,M)$ as 
a cyclic dual of $C^\bull(U,M)$.

%

\begin{dfn}
For a right $U$-module left $U$-comodule $M$ with compatible induced left $A$-actions
over a 
left Hopf algebroid $U$, 
we denote by
$H_\bull(U, M)$ and $HC_\bull(U,M)$ the
simplicial and cyclic homology groups of 
$C_\bull(U,M)$. We refer to 
$HC_\bull(U,M)$ as to the
{\em Hopf-cyclic homology of $U$ with
 coefficients in $M$}.
\end{dfn}

Dually to
Proposition~\ref{daumenschraube}, one
has:

\begin{prop}\cite{Kow:HAATCT, KowPos:TCTOHA}
\label{obecni}
If $\due U
 \blact {}$ is
 projective as left $A$-module, then one has 
$$
		  H_\bull(U,M)\simeq \Tor^U_\bull(M,A).
$$
\end{prop}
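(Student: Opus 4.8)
The plan is to identify the simplicial $k$-module underlying $C_\bull(U,M)$ — that is, $(C_\bull(U,M),d_\bull,s_\bull)$ with the cyclic operator forgotten — with $M\otimes_U P_\bull$ for a bar-type resolution $P_\bull\to A$ of $A$ in $\umod$, and to note that the hypothesis on $\due U\blact{}$ is exactly what makes $P_\bull$ consist of projective (hence flat) left $U$-modules. Granting this, one gets $H_\bull(U,M)=H_\bull(C_\bull(U,M))\cong H_\bull(M\otimes_U P_\bull)=\Tor^U_\bull(M,A)$. This runs dual to the mechanism behind Proposition~\ref{daumenschraube}, where instead the cobar complex $U^{\otimes_A\bull}\otimes_A M$ of the $A$-coring $U$ computes $\Cotor^\bull_U(A,M)$ as soon as $\due U{}\ract$ is flat over $A$.

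Concretely, I would take $P_n:=U\otimes_\Aopp(\due U\blact\ract)^{\otimes_\Aopp n}$, with tensor products formed as in \rmref{tata}, the leftmost copy of $U$ carrying its regular left $U$-module structure together with the $(U,\Aop)$-bimodule structure that makes the first $\otimes_\Aopp$ well defined. I would equip $P_\bull$ with the usual two-sided bar simplicial structure: inner faces multiply two adjacent copies of $U$, the last face applies the counit $\eps$ to the rightmost copy, the degeneracies insert a unit, and $\eps:U=P_0\to A$ is the augmentation. Because $M\otimes_U U\cong M$ as $(A,A)$-bimodules — the right $A$-structure being the one induced from the right $U$-action as in \rmref{salz} — applying $M\otimes_U(-)$ to $P_\bull$ should return $C_\bull(U,M)$, say via $m\otimes_U(u^0\otimes_\Aopp\cdots\otimes_\Aopp u^n)\mapsto mu^0\otimes_\Aopp u^1\otimes_\Aopp\cdots\otimes_\Aopp u^n$; under this identification the alternating sum of the bar faces goes over to $\sum_{i=0}^{n}(-1)^i d_i$ with the $d_i$ of \rmref{adualnightinpyongyang}, up to the harmless overall sign produced by reversing the order of the tensor factors, and the bar degeneracies go over to the $s_i$ there. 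The well-definedness over the various Sweedler presentations is precisely the point already dispatched right after \rmref{adualnightinpyongyang}.

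It then remains to check that $P_\bull\to A$ is a projective resolution in $\umod$. Exactness of $\cdots\to P_1\to P_0\to A\to 0$ follows from the standard extra degeneracy $u^0\otimes_\Aopp\cdots\otimes_\Aopp u^n\mapsto 1\otimes_\Aopp u^0\otimes_\Aopp\cdots\otimes_\Aopp u^n$, a $k$-linear contracting homotopy. For projectivity I would write $P_n=U\otimes_\Aopp V_n$ with $V_n:=(\due U\blact\ract)^{\otimes_\Aopp n}$ and use that $U\otimes_\Aopp(-)$ sends projectives to projective left $U$-modules; by an induction on $n$ — each additional tensor factor $\due U\blact\ract$ contributing, after the untwisting provided for a left Hopf algebroid by the translation map $\beta^{-1}$ (cf.~\rmref{Sch5}), a projective $A$-module — this reduces to the single hypothesis that $\due U\blact{}$ be projective over $A$. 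Hence each $P_n$ is a direct summand of a free left $U$-module, $P_\bull\to A$ is a projective resolution, and the claim follows.

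The main obstacle is really only the bookkeeping in the middle step: pinning down the precise $A$- and $\Aop$-module structures carried by $P_n$ so that $M\otimes_U P_n\cong C_n(U,M)$ and, simultaneously, $P_n$ is $U$-projective under exactly the hypothesis as stated, and then verifying that the bar faces and degeneracies transport to the operators of \rmref{adualnightinpyongyang}. This is mechanical but notation-heavy, of the same character as — and dual to — the identification carried out through the isomorphism $\phi$ of Proposition~\ref{excellentesoiree} on the cohomology side.
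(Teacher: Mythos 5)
The paper does not actually prove Proposition \ref{obecni} --- it is quoted from \cite{Kow:HAATCT,KowPos:TCTOHA} --- so the comparison is really with those references. Your overall strategy (realise $C_\bull(U,M)$ as $M\otimes_U P_\bull$ for a bar-type resolution $P_\bull\to A$ by left $U$-modules, then invoke projectivity) is indeed the one used there, and the identification half of your argument is sound: the relative bar complex over the subring $\tl(A)\cong\Aop$ does return $C_\bull(U,M)$ with the operators of \rmref{adualnightinpyongyang} after applying $M\otimes_U(-)$, and the extra degeneracy gives exactness of $P_\bull\to A$.

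The gap is in the projectivity step, and it is not mere bookkeeping. Your $P_n=U\otimes_\Aopp(\due U \blact \ract)^{\otimes_\Aopp n}$, with the left $U$-action by multiplication on the leftmost factor, is the module induced along $\tl:\Aop\to U$ from the left $\tl(A)$-module $(\due U \blact \ract)^{\otimes_\Aopp n}$; induction preserves projectives, so what your induction on $n$ actually requires is that each tensor factor be $\tl(A)$-projective \emph{for the structure through which it enters the tensor product on its left}, namely left multiplication by $\tl(a)$ --- that is the $\ract$-structure, not the $\blact$-structure. Unwound, the argument you describe proves the statement under the hypothesis ``$\due U {} \ract$ projective as a right $A$-module'' (the side appearing in Proposition \ref{daumenschraube}), not under the stated hypothesis ``$\due U \blact {}$ projective as a left $A$-module''; these are modules in different categories (a left versus a right $A$-action on $U$, one by right and one by left multiplication by $\tl(a)$), and neither condition implies the other. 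The place where the stated hypothesis can be made to do the work is exactly the ``untwisting'' you mention only in passing: one first identifies $C_\bull(U,M)\cong M\otimes_U\cP_\bull$ with $\cP_n=U^{\otimes_A n+1}$ carrying the \emph{diagonal} left $U$-action --- in the present paper this identification is already available by combining \rmref{corelli} with Lemma \ref{hopf-galois} --- and then uses the translation map to exhibit $\cP_n$ as induced from a module whose projectivity is governed by $\due U \blact {}$; carrying this out is the actual content of the proofs in the cited references, and the one-line appeal to $\beta^{-1}$ and \rmref{Sch5} does not supply it.
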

 
\begin{rem}
As in Remark \ref{citoyen}, in a full Hopf algebroid $H$ the base algebra $B$ of the underlying right bialgebroid 
is an anti
Yetter-Drinfel'd module which is stable if the antipode is an involution. 
The cyclic operator assumes the form
$$
t_n(u_1  \otimes_\Aopp   \cdots   \otimes_\Aopp u_n) = u_2^{(1)} \otimes_\Aopp \cdots \otimes_\Aopp u_{n}^{(1)} \otimes_\Aopp S(u_1 u_{2}^{(2)} \cdots u_{n}^{(2)}),
$$
where the Sweedler superscripts refer to
 the {\em right} coproduct. 
This is the same expression as the {\em inverse} of the cyclic operator given in \cite{Kow:HAATCT, KowPos:TCTOHA}, see our explanations below.
\end{rem}


\subsection{Cyclic duality}
{\cite{Con:CCEFE,Elm:ASFFCD,FeiTsy:AKT,Lod:CH}}  
Recall that the cyclic category
is self-dual, that is, we have $\Lambda_1 \simeq \Lambda_1^\op$, and therefore
cocyclic $k$-modules and cyclic
$k$-modules can be canonically identified.
However, there are even infinitely many
such canonical identifications since 
the cyclic category has many autoequivalences (see
e.g.~\cite[6.1.14 \& E.6.1.5]{Lod:CH}, 
but
note that the very last line of \cite[6.1.14]{Lod:CH}
should read $\tau_n \mapsto
\tau_n^{-1}$).

Fe{\u\i}gin and Tsygan have generalised the
duality to their category
$\Lambda_\infty$, that is, to para-(co)cyclic $k$-modules whose
cyclic operators are isomorphisms
(see \cite{FeiTsy:AKT}, Section A7).
Unfortunately, they use the most common
choice of equivalence $\Lambda_\infty
\simeq \Lambda_\infty^\op$ which does
not extend to
general para-(co)cyclic objects.

However, a different equivalence $\Lambda_\infty \simeq
\Lambda_\infty^\op$ \emph{does} lift to
a functor
$\Lambda^\op \rightarrow
\Lambda$, so that one can 
assign a para-cyclic module to any
para-cocyclic module even with not
necessarily invertible $\tau_n$, one only
has to bear in mind that this process is in general not invertible.
Still, it can be applied in full
generality to the para-cocyclic
object $C^\bull(U,M)$, even when $M$
is not SaYD, and hence 
Theorem~\ref{trocadero} follows from the
results of the previous section.

Explicitly, we use the
following convention for this
functor. We decided to stick to the term
``cyclic dual'' although it is no longer
a true duality in general:
\begin{definition}
The {\em cyclic dual} of  
a para-cocyclic $k$-module
$C^\bull = (C^\bull, \gd_\bull,
\gs_\bull, \tau_\bull)$ is
the cyclic $k$-module 
$\hatC_\bull := (\hatC_\bull, d_\bull, s_\bull,
t_\bull)$, where $\hatC_n := C^n$, and
\begin{equation}
\label{moulesfrites}
\begin{array}{lclcll}
d_i := \gs_{n-(i+1)} & \!\!: & \hatC_n & \to & \hatC_{n-1}, & \quad 0 \leq i < n, \\
d_n := \gs_{n-1} \circ \tau_n  & \!\!: & \hatC_n & \to & \hatC_{n-1}, &  \\
s_i := \gd_{n-(i+1)} & \!\!: & \hatC_{n-1} & \to & \hatC_{n}, & \quad 0 \leq i < n, \\
t_n :=  \tau_n  & \!\!: & \hatC_n & \to & \hatC_{n}. &
\end{array}
\end{equation}
\end{definition}

For the convenience of the reader we
verify at least some of the relations:
\begin{lemma}
The cyclic dual of any para-cocyclic
 $k$-module is a para-cyclic $k$-module.
\end{lemma}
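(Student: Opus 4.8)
The plan is to verify directly that the operators $(d_\bull, s_\bull, t_\bull)$ defined in \rmref{moulesfrites} satisfy the simplicial identities and the para-cyclic relations in the left column of \rmref{belleville}, using only the cosimplicial identities for $(\gd_\bull, \gs_\bull)$ and the para-cocyclic relations in the right column of \rmref{belleville}. The key structural observation is that the assignment $[n] \mapsto [n]$ on objects together with the generator reindexing $\gd_i \mapsto s_{n-(i+1)}$, $\gs_i \mapsto d_{n-(i+1)}$, $\tau_n \mapsto t_n$ is essentially the anti-autoequivalence of the para-cyclic category $\Lambda$ alluded to in Section~\ref{defilambda}; so the lemma amounts to checking that the chosen functor $\Lambda^\op \to \Lambda$ is well-defined, i.e.\ that it respects all defining relations.

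First I would organise the relations to check into three groups. Group one: the pure simplicial identities among $d_i, s_i$ (the usual $d_i d_j = d_{j-1} d_i$ for $i<j$, $s_i s_j = s_{j+1} s_i$ for $i \le j$, and the three mixed $d_i s_j$ cases). For the indices where $d_n := \gs_{n-1} \circ \tau_n$ is \emph{not} involved, each of these is just the corresponding cosimplicial identity for $\gd, \gs$ read backwards under $i \mapsto n-(i+1)$; this is a bookkeeping check of index shifts, handled quickly. Group two: the relations involving $t_n$, namely $d_i t_n = t_{n-1} d_{i-1}$ for $1\le i\le n$ and $d_0 t_n = d_n$, and $s_i t_n = t_{n+1} s_{i-1}$ for $1 \le i \le n$ with $s_0 t_n = t_{n+1}^2 s_n$. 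Translating through \rmref{moulesfrites}, e.g.\ $d_0 t_n = \gs_{n-1}\circ\tau_n = d_n$ is immediate by definition; $d_i t_n = \gs_{n-(i+1)}\circ \tau_n$ for $1 \le i \le n-1$ must be matched against $t_{n-1} d_{i-1} = \tau_{n-1}\circ \gs_{n-i}$, which is exactly the relation $\tau_n \circ \sigma_i = \sigma_{i-1}\circ\tau_{n+1}$ from the right column of \rmref{belleville} after the appropriate index substitution; the boundary case $i=n$, where $d_n = \gs_{n-1}\circ\tau_n$ enters, and the degeneracy relations (which will invoke both cases of $\tau_n\circ\gs_i$ including the $\tau^2$ one, matching $s_0 t_n = t_{n+1}^2 s_n$) need to be done by hand.

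Group three, and the only genuinely delicate part, consists of the identities in which the composite definition $d_n = \gs_{n-1}\circ\tau_n$ meets another face or a degeneracy or $t$: precisely $d_n d_n$, $d_i d_n$ for $i<n$, $d_n s_j$, $d_n t_n$, and similar. Here one cannot just quote a single relation from \rmref{belleville}; one has to expand $d_n$, slide $\tau_n$ past a coface or codegeneracy using the appropriate case of $\tau_n\circ\gd_i$ or $\tau_n\circ\gs_i$, and then recombine. The main obstacle will be managing these expansions so that the $\tau$'s cancel or land in the right position — this is the classical computation underlying self-duality of the cyclic category, and the care lies in tracking which branch ($1 \le i \le n$ versus $i=0$) of each relation in \rmref{belleville} applies after the index shift. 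I would do one representative case in full (say $d_n \circ d_n = d_{n-1}\circ d_n$, the ``hardest'' simplicial identity) to illustrate the mechanism, and then remark that the remaining cases are analogous, exactly as the statement (``we verify at least some of the relations'') invites. No use of the Hopf-algebroid structure or of $M$ is needed anywhere: the proof is purely formal in $\Lambda$.
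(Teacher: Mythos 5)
Your proposal is correct and follows essentially the same route as the paper's own proof: a direct, case-by-case verification of the para-cyclic relations from the para-cocyclic ones under the index reversal $i\mapsto n-(i+1)$, with explicit treatment of the delicate cases where $d_n=\gs_{n-1}\circ\tau_n$ must be expanded and $\tau$ slid past a codegeneracy (the paper works out $d_i\circ d_j$, $d_i\circ d_n$, $d_i\circ s_i$, $d_i\circ t_n$, and $s_0\circ t_n=t_{n+1}^2\circ s_n$, leaving the rest to the reader). Your grouping of the relations and your identification of the $d_n$-composites as the only genuinely nontrivial checks match the paper's computation exactly.
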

\begin{proof}
We need to check the para-cyclic relations
by using the para-cocyclic ones,
which
is straightforward. For example,
let $i < j$ and $j < n$; then $n - (i+2) \geq n- (j+1)$, and
$$
d_i \circ d_j = \gs_{(n-1) - (i+1)} \circ \gs_{n - (j+1)} = \gs_{(n - 1) - j}\circ \gs_{n - (i+1)} = d_{j-1} \circ d_i. 
$$
For $j = n$ (in which case $i \leq n-2$),
\begin{equation*}
\begin{split}
d_i \circ d_n &= \gs_{(n-1) - (i+1)} \circ \sigma_{n-1} \circ \tau_n \\
&= \gs_{n-2} \circ \gs_{(n-1) - (i+1)} \circ \tau_n \\
&= \gs_{n-2} \circ \tau_{n-1} \circ \gs_{n - (i+1)} = d_{n-1} \circ d_i.
\end{split}
\end{equation*}
Likewise, 
$$
d_i \circ s_i = \gs_{n-(i+1)} \circ \gd_{n-(i+1)} = \id = \gs_{n - j - 2} \circ \gd_{n-j -1} = d_{j+1} \circ s_j.
$$
Also
$$
d_i \circ t_n = \gs_{n- (i+1)} \circ \tau_n = \gs_{n-i-1} \circ \tau_n = \tau_{n-1} \circ \gs_{n-i} = t_{n-1} \circ d_{i-1}
$$
for $1 \leq i \leq n-1$, and for $i=n$ the identity 
$
d_0 \circ t_n = d_n
$
is trivially fulfilled. Finally,
$$
s_0 \circ t_n = \gd_{n-1} \circ \tau_n = \tau_{n+1} \circ \gd_n = \tau_{n+1} \circ \tau_{n+1} \gd_0 = t^2_{n+1} \circ s_n. 
$$
The rest of the simplicial and cyclic identities are left to the reader.
\end{proof}

\begin{rem}
 Note that the last coface map $\gd_{n} : C^{n-1}
 \rightarrow C^n$ is not
used in the construction of the cyclic dual: there is one less degeneracy
 $s_i : C_{n-1} \rightarrow C_n$ than there are cofaces
 $\delta_i : C^{n-1}
 \rightarrow C^n$. Conversely, there are
not enough codegeneracies to derive all
the face maps:
the last face map $d_n$ uses the extra
codegeneracy $\gs_{n-1} \circ \tau_n$ that
 arises from the (para-)cocyclic operator.
\end{rem}    

\begin{rem}
Observe that the cyclic homology of the
cyclic dual of a given cocyclic
$k$-module is independent of the choice 
of the self-duality of the cyclic
category $\Lambda_1$. 
This follows from the
 description of cyclic homology as 
$\mathrm{Tor}^{\Lambda^\op_1}_\bull(k,C)$ 
(cf.~\cite{Lod:CH}, Theorem~6.2.8)
in combination with the fact
 that all autoequivalences of $\Lambda_1$
 leave the trivial cyclic $k$-module $k$
invariant.  
\end{rem}

\begin{rem}
Two relatively straightforward cases in which the cyclic
 operator is not
 invertible are that of a Hopf algebra $U$ 
(over $A=k$) whose
antipode is not bijective, taking the
 coefficients to be $M=k$ with trivial action
$1 \cdot
 u=\eps(u)$ and trivial coaction $\Delta_\emme (1)=1 \otimes
 1$; or that of
$U=A^e$, $M=A_\sigma$, discussed in Section~\ref{beispielabsch}
below, when $\sigma$ is not bijective.

However, it seems worthwhile to remark
 that $\tau$ is invertible if $U$ is a full
Hopf algebroid with invertible antipode $S$ and $M$ has yet some additional structure:
recall first \cite{Boe:GTFHA} that
the two constituting bialgebroids (i.e.\ left and right) in a full Hopf algebroid 
have different underlying corings
(over anti-isomorphic base algebras) that have a priori different categories of comodules. 
A {\em Hopf algebroid (say, left) comodule} is then, roughly speaking, both a left and right bialgebroid (left) 
comodule, the two structures being compatible with each other.
If $M$ is a left comodule over the full Hopf algebroid $U$ and aYD in the sense of Definition \ref{SAYD} with respect to the underlying left bialgebroid, 
one checks by a tedious but straightforward induction on $n$ that
$$
w \otimes_\ahha m \mapsto 
\big(S^{-2}(u^n_-) m^{(-1)}{}_{(1)}\big) \cdot \big(1 \otimes_\ahha u^1 \otimes_\ahha \cdots \otimes_\ahha u^{n-1}\big) \otimes_\ahha m^{(0)} 
S^{-1}(m^{(-1)}{}_{(2)})S^{-2}(u^n_+)  
$$
yields an inverse for the cocyclic operator $\tau_n$ from \rmref{anightinpyongyang}, where we abbreviated
$w:=u^1 \otimes_\ahha \cdots \otimes_\ahha u^n$.
Here $\cdot$ denotes the diagonal action via the left coproduct and Sweedler superscripts the left coaction with respect to the underlying right bialgebroid in $U$. In case $M=B \simeq \Aop$, this reduces to the well-known expression
$$
		  u^1 \otimes_\ahha \cdots
 \otimes_\ahha u^n \mapsto 
		  (S^{-1}(u^n)) \cdot \big(1 \otimes_\ahha u^1 \otimes_\ahha \cdots \otimes_\ahha u^{n-1}\big)  
$$
from \cite{Kow:HAATCT, KowPos:TCTOHA}.
If $M$ is an SaYD so that
$C^\bull(U,M)$ is cocyclic, then  
the inverse of $\tau_n$ is simply given for any
 left Hopf algebroid $U$ by
\begin{equation*}
		  \tau^{-1}_n 
		  (u^1 \otimes_\ahha \cdots \otimes_\ahha u^n \otimes_\ahha m) 
		  = 
		  u^n_{-}m_{(-1)} \cdot \big(1 \otimes_\ahha u^1 
		  \otimes_\ahha \cdots \otimes_\ahha u^{n-1}\big) \otimes_\ahha m_{(0)}u^n_+. 
\end{equation*} 
\end{rem}

\subsection{The Hopf-Galois map and cyclic duality}
\label{adartem}
The explicit map implementing the isomorphism 
 $C_\bull (U,M) \simeq C^\bull (U,M)$ is given by 
generalising the Hopf-Galois map \rmref{Galois}:
\begin{lemma}
\label{hopf-galois}
For each $n \geq 0$, the $k$-modules $C_n(U,M)$ and $C^n(U,M)$ are
isomorphic by means of the Hopf-Galois map
$\varphi_n:C_n( U,M) \to C^n(U,M)$ in degree $n$, defined 
by $\varphi_0:=\id_\emme$, $\varphi_1: m \otimes_\Aopp u \mapsto u \otimes_\ahha m$, 
and for $n \geq 2$
\begin{equation}
\label{Hin}
\varphi_n: m \otimes_\Aopp u^1  \otimes_\Aopp  \cdots \otimes_\Aopp u^n \mapsto 
u^1_{(1)} \otimes_\ahha u^1_{(2)} u^2_{(1)} \otimes_\ahha \cdots \otimes_\ahha u^1_{(n)} u^2_{(n-1)} \cdots u^{n-1}_{(2)}u^n \otimes_\ahha m,
\end{equation}
with inverse
\begin{equation*}
\label{Her}
\psi_n: u^1 \otimes_\ahha \cdots \otimes_\ahha u^n \otimes_\ahha m \mapsto  
m \otimes_\Aopp u^1_+ \otimes_\Aopp u^1_-  u^2_+  \otimes_\Aopp u^2_- u^3_+  \otimes_\Aopp \cdots  \otimes_\Aopp u^{n-1}_-  u^n.
\end{equation*}
\end{lemma}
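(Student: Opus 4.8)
The plan is to verify directly that $\varphi_n$ and $\psi_n$ are mutually inverse $k$-linear maps, and that both are well-defined on the relevant tensor products over $A$ and $A^{\mathrm{op}}$ (respecting the various Sweedler presentations). Well-definedness of $\varphi_n$ is the easier half: the source $C_n(U,M) = M \otimes_{\Aopp} (\due U \blact \ract)^{\otimes_{\Aopp} n}$ is built from the balancing $a \blact u \otimes_k v - u \otimes_k v \ract a$ as in \rmref{tata}, and one checks that under \rmref{Hin} the factor $u^i$ only enters through its coproduct $\Deltaell(u^i)$ with a multiplication $u^{i-1}_{(\cdot)} u^i_{(1)}$ on the left, so the Takeuchi property \rmref{Sch3}-type identities for $\Deltaell$ (that is, $\Deltaell(u) \in U \times_A U$) together with the module axioms \rmref{brot}, \rmref{salz} let the $\blact,\ract$-actions pass through; the $M$-factor stays inert except in degree $n=1$. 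For $\psi_n$ one uses that the translation map lands in the Takeuchi product, \rmref{Sch3}, and \rmref{Sch5} to see that the expression $m \otimes_{\Aopp} u^1_+ \otimes_{\Aopp} u^1_- u^2_+ \otimes_{\Aopp} \cdots$ is well-defined over the $\lact,\ract$-balanced tensor product $C^n(U,M) = U^{\otimes_A n} \otimes_A M$, with \rmref{douceuretresistance} and \rmref{maotsetung} handling the interaction with the $M$-slot.

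The core of the proof is the computation $\psi_n \circ \varphi_n = \id$ and $\varphi_n \circ \psi_n = \id$. For $\psi_n \circ \varphi_n$, after applying $\varphi_n$ one obtains a tensor of the $u^1_{(k)} u^2_{(k-1)} \cdots$ form, and applying $\psi_n$ produces $(\cdot)_+$ and $(\cdot)_-$ of these products; one then collapses the result using \rmref{Sch1} (in the form $u_{+(1)} \otimes_A u_{+(2)} u_- = u \otimes_A 1$) and \rmref{Sch4} (the multiplicativity $(uv)_+ \otimes_\Aopp (uv)_- = u_+ v_+ \otimes_\Aopp v_- u_-$) repeatedly, telescoping from the outside in, together with counitality of $\Deltaell$ to absorb the spurious $\eps$-terms. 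For $\varphi_n \circ \psi_n$, one starts from $m \otimes_\Aopp u^1_+ \otimes_\Aopp u^1_- u^2_+ \otimes_\Aopp \cdots \otimes_\Aopp u^{n-1}_- u^n$, applies $\varphi_n$, and uses \rmref{Sch38} (to commute $(\cdot)_{(1)}$ past $(\cdot)_+$) and again \rmref{Sch1} to contract each adjacent $u^i_+ \cdots u^i_-$ pair back to $u^i$; the identity $u_{+(1)} \otimes_A u_{+(2)} u_- = u \otimes_A 1$ is exactly what makes these cancellations work one tensor slot at a time. The low-degree cases $n=0,1$ are immediate from the stated $\varphi_0 = \id_\emme$, $\varphi_1: m \otimes_\Aopp u \mapsto u \otimes_\ahha m$.

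The main obstacle I expect is bookkeeping the iterated coproducts and translation maps correctly: equation \rmref{Hin} nests $n$-fold coproducts in a triangular array $u^1_{(1)}, u^1_{(2)}u^2_{(1)}, \ldots$, and equation for $\psi_n$ nests translation maps, so proving the two compositions are the identity requires an induction on $n$ (or a careful telescoping argument) in which one isolates the outermost or innermost slot, applies \rmref{Sch1}/\rmref{Sch4}/\rmref{Sch38}, and reduces to the $(n-1)$-case. Getting the placement of $u^n$ (which appears undifferentiated in the last slot of both $\varphi_n$'s image and $\psi_n$'s image) to match up is the delicate point, as is tracking which tensor products are over $A$ versus $A^{\mathrm{op}}$ through each application of the translation-map identities \rmref{Sch37}, \rmref{Sch38}. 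Once the inversion is established, Lemma~\ref{hopf-galois} is complete; the compatibility of $\varphi_\bull$ with the para-(co)cyclic structure maps — which is what actually yields Theorem~\ref{trocadero} — is a separate verification not claimed in this statement.
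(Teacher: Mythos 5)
Your proposal is correct and follows essentially the same route as the paper: well-definedness over the various Sweedler presentations via the Takeuchi conditions \rmref{taki} and \rmref{Sch3}, and mutual inversion by induction on $n$ via a telescoping application of the translation-map identities (the paper invokes \rmref{Sch1} and \rmref{Sch2}, which together with the multiplicativity \rmref{Sch4} are exactly the cancellations you describe). The paper's own proof is a two-line sketch of precisely this argument, so your more detailed plan is a faithful expansion of it.
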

\begin{proof}
Well-definedness and well-definedness over the respective Sweedler
presentations follows from the Takeuchi conditions
\rmref{taki} and \rmref{Sch3}.
The fact that $\varphi$ and $\psi$ are mutually inverse is directly
checked by induction on $n$ using the properties \rmref{Sch1} and \rmref{Sch2}. 
\end{proof}

\begin{lem}
Let $U$ be a left Hopf algebroid 
with structure maps as before.
The Hopf-Galois map identifies $C_\bull (U,M)$ as the cyclic dual of the cocyclic module $C^\bull(U,M)$ of Theorem \ref{bufalinadachteich}.
\end{lem}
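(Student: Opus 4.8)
The plan is to use the Hopf--Galois isomorphisms $\varphi_n$ of Lemma~\ref{hopf-galois} to realise $C_\bull(U,M)$ as the cyclic dual of $C^\bull(U,M)$. Since each $\varphi_n$ is a $k$-linear isomorphism with inverse $\psi_n$, and since the cyclic dual of a para-cocyclic $k$-module was shown above to be para-cyclic --- and cyclic as soon as the original is cocyclic, which for $C^\bull(U,M)$ is Theorem~\ref{bufalinadachteich} --- it is enough to check that $\varphi=(\varphi_n)$ intertwines the operators \rmref{adualnightinpyongyang} with the cyclic-dual structure maps \rmref{moulesfrites} built from the cocyclic operators \rmref{anightinpyongyang}, i.e.\ (suppressing the evident degree subscripts)
\begin{align*}
\varphi\circ d_i &= \gs_{n-(i+1)}\circ\varphi\ \ (0\le i<n), & \varphi\circ d_n &= \gs_{n-1}\circ\tau_n\circ\varphi, \\
\varphi\circ s_i &= \gd_{n-(i+1)}\circ\varphi, & \varphi\circ t_n &= \tau_n\circ\varphi.
\end{align*}
Because $\varphi_0=\id_\emme$ and $\varphi_1$ is the flip $m\otimes_\Aopp u\mapsto u\otimes_\ahha m$, all of these hold trivially for $n\le 1$, so I would fix $n\ge 2$ throughout.

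For the degeneracies, for the inner faces $d_i$ with $1\le i\le n-1$, and for the boundary face $d_0$, the computation is of bar-complex type. Under $\varphi_n$ a tensor $m\otimes_\Aopp u^1\otimes_\Aopp\cdots\otimes_\Aopp u^n$ becomes the ``staircase'' $u^1_{(1)}\otimes_\ahha u^1_{(2)}u^2_{(1)}\otimes_\ahha\cdots\otimes_\ahha u^1_{(n)}u^2_{(n-1)}\cdots u^{n-1}_{(2)}u^n\otimes_\ahha m$ of iterated coproducts, and one reads off that inserting a unit in a slot on the $C_\bull$-side amounts --- via coassociativity of $\Deltaell$ and the Takeuchi conditions \rmref{taki}, \rmref{Sch3} --- to splitting one leg of the staircase with the coface $\gd_{n-(i+1)}$, while contracting a factor by the counit through $\gs_{n-(i+1)}$ fuses two adjacent legs into the product $u^{n-i}u^{n-i+1}$ that $d_i$ produces; the case $d_0$ additionally uses $\eps(u\,\sll(a))=\eps(u\,\tl(a))$ and the passage between the $\lact,\ract$- and $\blact,\bract$-bimodule structures, which is exactly what puts the $\blact$ into its formula. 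I would record one representative computation in each family and leave the remaining routine ones to the reader.

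The real work is the cyclic operator, the last face being a consequence. To prove $\varphi\circ t_n=\tau_n\circ\varphi$, substitute the definition of $t_n$ from \rmref{adualnightinpyongyang}, apply $\varphi_n$, and commute the translation-map legs $u^i_\pm$ past the iterated coproducts using \rmref{Sch1}, the compatibility \rmref{Sch38} of the translation map with $\Deltaell$, and its multiplicativity \rmref{Sch4} --- with \rmref{Sch3} keeping the intermediate expressions well defined over $\otimes_{\Aop}$ and \rmref{Sch5} absorbing the source and target terms; on the other side one applies $\tau_n$ to the staircase $\varphi_n(m\otimes_\Aopp x)$ and simplifies its own translation map $(u^1_{(1)})_\pm$ via \rmref{Sch2} and \rmref{Sch38}. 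An induction on $n$ keeps this organised, and the two sides coincide. The last face $\varphi\circ d_n=\gs_{n-1}\circ\tau_n\circ\varphi$ then follows either directly --- $\gs_{n-1}$ collapses the final leg of $\tau_n\varphi_n(m\otimes_\Aopp x)$ by the counit, and \rmref{Sch47} together with the counitality of the coaction turn the surviving $\eps$-term into the module action $mu^1$ of $d_n$ --- or, since the cyclic dual is already known to be para-cyclic, by combining $\varphi t_n=\tau_n\varphi$ with the case $\varphi d_0=\gs_{n-1}\varphi$ and the relation $d_0\circ t_n=d_n$, which one verifies among the maps \rmref{adualnightinpyongyang}. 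The main obstacle will be exactly this juggling of translation maps, complicated by the coexistence of the two tensor products ($\otimes_A$ for $C^\bull(U,M)$ and $\otimes_{\Aop}$ for $C_\bull(U,M)$); the rest is formal. Once all the intertwining relations are established, $\varphi$ realises $C_\bull(U,M)$ as the cyclic dual of $C^\bull(U,M)$; it is then a para-cyclic $k$-module, and a cyclic one when $M$ is stable anti Yetter-Drinfel'd, which is Theorem~\ref{trocadero}.
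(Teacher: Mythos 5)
Your proposal is correct and follows essentially the same route as the paper: one verifies that the Hopf--Galois maps $\varphi_n$ intertwine the cyclic operators, $\tau_n\circ\varphi_n=\varphi_n\circ t_n$, by pushing the translation-map legs through the iterated coproducts with the identities \rmref{Sch1}, \rmref{Sch2}, \rmref{Sch37}, \rmref{Sch38}, and leaves the remaining (co)face/(co)degeneracy compatibilities as routine. The paper's proof is exactly this computation written out for $\tau_n$, so your sketch matches it in both strategy and choice of key identities.
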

\begin{proof}
We need to show e.g.\ for the cyclic operators \rmref{anightinpyongyang} and \rmref{adualnightinpyongyang} 
$$
\tau_n \circ \varphi_n = \varphi_n \circ t_n
$$
with respect to the map \rmref{Hin}. 
This is a straightforward verification: one has
\begin{equation*}
\begin{split}
& \tau_n \varphi_n(m \otimes_\Aopp u^1 \otimes_\Aopp \cdots \otimes_\Aopp u^n) \\
&= 
\tau_n(u^1_{(1)} \otimes_\ahha u^1_{(2)} u^2_{(1)} \otimes_\ahha \cdots \otimes_\ahha u^1_{(n)} u^2_{(n-1)} \cdots u^{n-1}_{(2)}u^n \otimes_\ahha m) \\
&= 
u^1_{(1)-(1)} u^1_{(2)} u^2_{(1)} \otimes_\ahha \cdots \otimes_\ahha u^1_{(1)-(n-1)} u^1_{(n)} u^2_{(n-1)} \cdots u^{n-1}_{(2)}u^n \\
& \hspace*{6cm}
\otimes_\ahha u^1_{(1)-(n)} m_{(-1)} \otimes_\ahha m_{(0)} u^1_{(1)+}  \\
&= 
u^1_{(1)-(1)} u^1_{(2)} u^2_{(1)} \otimes_\ahha \cdots \otimes_\ahha u^1_{(1)-(n-1)} u^1_{(n)} u^2_{(n-1)} \cdots u^{n-1}_{(2)}u^n \\
& \hspace*{6cm}
\otimes_\ahha u^1_{(1)+-} m_{(-1)} \otimes_\ahha m_{(0)} u^1_{(1)++}  \\
&= 
u^2_{(1)} \otimes_\ahha u^2_{(2)} u^3_{(1)} \otimes_\ahha \cdots \otimes_\ahha u^2_{(n-1)} \cdots u^{n-1}_{(2)}u^n \otimes_\ahha u^1_{-} m_{(-1)} \otimes_\ahha m_{(0)} u^1_{+}  \\
\end{split}
\end{equation*}
using \rmref{Sch37} and \rmref{Sch2}; whereas
\begin{equation*}
\begin{split}
&\varphi_n t_n(m \otimes_\Aopp u^1 \otimes_\Aopp \cdots \otimes_\Aopp u^n) \\
&= \varphi_n (m_{(0)} u^1_+ \otimes_\Aopp u^2_+ \otimes_\Aopp  \cdots  \otimes_\Aopp u^n_+ \otimes_\Aopp u^n_- \cdots u^1_- m_{(-1)}) \\
&= u^{2}_{+(1)} \otimes_\ahha u^{2}_{+(2)}u^{3}_{+(1)} \otimes_\ahha \cdots \otimes_\ahha 
u^{2}_{+(n)}u^{3}_{+(n-1)} \cdots u^{n}_{+(2)} u^n_- \cdots u^1_- m_{(-1)}  \otimes_\ahha m_{(0)} u^1_+ \\
&= u^{2}_{(1)} \otimes_\ahha u^{2}_{(2)}u^{3}_{(1)} \otimes_\ahha \cdots \otimes_\ahha u^2_{(n-1)} \cdots u^{n-1}_{(2)} u^n \otimes_\ahha
u^1_- m_{(-1)}  \otimes_\ahha m_{(0)} u^1_+
\end{split}
\end{equation*}
by \rmref{Sch38} and \rmref{Sch1}, and the claim follows.
The corresponding identities relating (co)faces to (co)degeneracies are left to the reader.
\end{proof}


\noindent {\it Proof}
({\it of Theorem \ref{trocadero}}). 
\ This now follows from Theorem \ref{bufalinadachteich} and Theorem \ref{jetztjetitlos}.
{\mbox{}\hfill$\bx$\vspace{1.5ex} \par}

\section{Examples}
\subsection{Lie-Rinehart homology with coefficients}

Let $(A,L)$ be a Lie-Rinehart algebra over a commutative $k$-algebra $A$ and $VL$ be its universal enveloping algebra 
(see \cite{Rin:DFOGCA}).

The {\em left Hopf algebroid structure} of $VL$ has been described in \cite{KowKra:DAPIACT}; 
as therein, we denote by the same symbols elements $a \in A$ and $X \in L$ and the corresponding generators in $VL$.
The maps $s = t$ are equal to the canonical injection $A \to VL$. The coproduct and the counit are given by 
$$
\begin{array}{rclrcl}
\gD(X) &:=& X \otimes_A 1 + 1 \otimes_A X, & \qquad \eps(X) &:=& 0, \\
\gD(a) &:=& a \otimes_A 1, &                 \qquad \eps(a) &:=& a,
\end{array}
$$
whereas the inverse of the Hopf-Galois map is
$$
X_+ \otimes_\Aop X_- := X \otimes_\Aop 1 - 1 \otimes_\Aop X, \qquad a_+ \otimes_\Aop a_- := a \otimes_A 1.
$$ 
By universality, 
these maps can be extended to $VL$.

Recall from \cite{Hue:DFLRAATMC} that a {\em right $(A,L)$-module} $M$ 
is simultaneously a left $A$-module with action $(a,m) \mapsto am$ and 
a right $L$-module with action $(m,X) \mapsto mX$, subject to the compatibility conditions
\begin{equation*}
\begin{array}{rl}
\begin{array}{rcl}
(am)X &=& a(mX) - X(a)m, \\
 m(aX)&=& a(mX) - X(a)m, 
\end{array}
& m \in M, \ a \in A, \ X \in L.
\end{array}
\end{equation*}
Right $(A,L)$-module structures correspond to right $VL$-module structures and vice versa.
For a right $(A,L)$-module $M$ we define {\em Lie-Rinehart homology with 
coefficients in $M$} as
\begin{equation}
\label{chezjeanette}
H_\bull(L,M) := \Tor^{VL}_\bull(M,A).
\end{equation}

Interestingly enough, every right $(A,L)$-module is an SaYD module with respect to the trivial coaction (cf. Remark~\ref{cannibale}):

\begin{lem}
\label{menilmontant}
Let $M$ be any right $(A,L)$-module and define on $M$ a left $VL$-coaction by
$
\gD_\emme: M \to VL \otimes_A M, \ m \mapsto 1 \otimes_A m.
$
Then $M$ is a stable anti Yetter-Drinfel'd module.
\end{lem}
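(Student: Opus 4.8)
The plan is to verify each of the three defining conditions of a stable anti Yetter--Drinfel'd module from Definition~\ref{SAYD} directly for the trivial coaction $\gD_\emme(m) = 1 \otimes_A m$, using only the axioms of a right $(A,L)$-module and the explicit left Hopf algebroid structure of $VL$ recalled above. Since $s = t$ on $VL$ and $A$ is commutative, the bookkeeping should be light.

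First I would check condition \rmref{campanilla}, i.e.\ $amb = a \blact m \bract b$ for all $a, b \in A$, $m \in M$. For the trivial coaction the right $A$-action \rmref{grimm} becomes $ma = \eps(m_{(-1)}\sll(a))m_{(0)} = \eps(\sll(a))m = am$, so the $\Ae$-structure coming from the comodule side is just $a \cdot m \cdot b = (ab)m$ (using commutativity of $A$). On the module side, \rmref{salz} gives $a \blact m \bract b = m\sll(b)\tl(a) = m(ba) = (ab)m$, where $m\sll(b)$ denotes the right $VL$-action applied to the image of $b$ under $s\colon A \to VL$, which by the definition of a right $(A,L)$-module is just the $A$-action. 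So both sides agree.

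Next I would check the anti Yetter--Drinfel'd condition \rmref{huhomezone}: $\gD_\emme(mu) = u_- m_{(-1)} u_{+(1)} \otimes_A m_{(0)} u_{+(2)}$. The left-hand side is $1 \otimes_A mu$. For the right-hand side, since $m_{(-1)} = 1$ and $m_{(0)} = m$, it becomes $u_- u_{+(1)} \otimes_A m u_{+(2)}$. The key computation is $u_- u_{+(1)} \otimes_A m u_{+(2)}$; one uses \rmref{Sch1}, which reads $u_{+(1)} \otimes_A u_{+(2)}u_- = u \otimes_A 1$, but one has to move $u_-$ to the front. I expect this step — reconciling the ordering of $u_-$ relative to the coproduct of $u_+$ — to be the main (and really the only) obstacle; it should follow from the cocommutativity-type behaviour of $VL$ on generators together with \rmref{Sch37} and \rmref{Sch47}, and then extend from the generators $a \in A$, $X \in L$ to all of $VL$ by multiplicativity of the translation map \rmref{Sch4} and of the coproduct. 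Concretely, on $X \in L$ one has $X_+ \otimes_{\Aop} X_- = X \otimes_{\Aop} 1 - 1 \otimes_{\Aop} X$, so the right-hand side of \rmref{huhomezone} evaluates, after using $\gD(X) = X \otimes_A 1 + 1 \otimes_A X$ and $\eps(X) = 0$, to $1 \otimes_A mX$; on $a \in A$ it is immediate; and the inductive step uses \rmref{Sch4} and coassociativity, together with the already-verified cases, to push through products.

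Finally I would check stability: $m_{(0)} m_{(-1)} = m$. With the trivial coaction this reads $m \cdot 1 = m$ (the right $VL$-action of the unit), which is trivially true. Assembling the three verifications then gives that $M$ is a stable anti Yetter--Drinfel'd module, completing the proof. The only place requiring genuine care is the induction in the second step, and even there the base cases are short because the translation map and coproduct are both explicit and ``primitive-like'' on generators.
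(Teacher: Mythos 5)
Your proposal is correct and follows essentially the same route as the paper: stability and condition \rmref{campanilla} are immediate for the trivial coaction (since the left and right $A$-actions coincide and $s=t$), and the anti Yetter--Drinfel'd identity \rmref{huhomezone} is verified on the generators $a\in A$ and $X\in L$ using $X_+\otimes_{\Aop}X_-=X\otimes_{\Aop}1-1\otimes_{\Aop}X$ and $\gD(X)=X\otimes_A 1+1\otimes_A X$, then extended to monomials $aX_1\cdots X_p$ by induction via the multiplicativity \rmref{Sch4}. The worry about ``moving $u_-$ past the coproduct of $u_+$'' dissolves exactly as you suspect once one argues on generators and inducts, which is precisely what the paper does.
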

\begin{proof}
Equipped with this coaction, $M$ is obviously stable, and also \rmref{campanilla} is immediate (observe that left and right $A$-action on $M$ coincide). 
Hence it remains to show \rmref{huhomezone}. 
With the left Hopf algebroid structure maps mentioned above, it is easy to see that on generators
$$
\gD_\emme(mX) = 1 \otimes_A mX = X_- X_{+(1)} \otimes_A mX_{+(2)} = X_- m_{(-1)} X_{+(1)} \otimes_A m_{(0)} X_{+(2)}
$$ 
holds for $X \in L$, 
and trivially on generators $a \in A$. For an element 
$u =  aX_{1} \cdots X_{p}$, 
where $a \in A$, $X_i\in L$, one immediately obtains
\begin{equation*}
\begin{split}
\gD(mu) &= 1 \otimes_A mu'X_{p} \\
        &=  (X_{p})_- (X_{p})_{+(1)} \otimes_A mu'(X_{p})_{+(2)} \\
        &= (X_{p})_- (mu')_{(-1)} (X_{p})_{+(1)} \otimes_A (mu')_{(0)}(X_{p})_{+(2)}
\end{split}
\end{equation*}
for
$u' =  aX_{1} \cdots X_{p-1}$. 
By induction on $p$ and \rmref{Sch4} 
one concludes $\gD(mu) = u_- m_{(-1)} u_{+(1)} \otimes_A m_{(0)}u_{+(2)}$, as desired.
\end{proof}

Recall that there is a canonical complex that computes $H_\bull(L,M)$ whenever $L$ is $A$-projective. 
This is given by the exterior algebra $\bigwedge_A^\bull L$
tensored over $A$ with $M$, with differential 
$\partial=\partial_n:M \otimes_\ahha  \textstyle\bigwedge^n_\ahha L \to M \otimes_\ahha  \textstyle\bigwedge^{n-1}_\ahha L$ defined by
\begin{equation*}
\begin{split}
\partial(&m \otimes_\ahha  X_1\wedge\cdots\wedge X_n)\\
:=&\sum_{i=1}^n(-1)^{i+1} 
mX_i \otimes_\ahha  X_1\wedge\cdots\wedge\hat{X}_i\wedge\cdots\wedge X_n\\
&+\sum_{i<j}(-1)^{i+j}m \otimes_\ahha  [X_i,X_j]\wedge X_1\wedge\cdots\wedge\hat{X}_i
\wedge\cdots\wedge\hat{X}_j\wedge\cdots\wedge X_n.
\end{split}
\end{equation*}

\noindent The following theorem generalises \cite[Thm.\ 3.13]{KowPos:TCTOHA} to more general coefficients.

\begin{theorem}
Let $(A,L)$ be a Lie-Rinehart algebra, where $L$ is $A$-projective,
and $M$ a right $(A,L)$-module which is $A$-flat, seen also as a left $VL$-comodule as in Lemma \ref{menilmontant}. 
The map
$$
\Xi: m \otimes_\ahha  X_1 \wedge \cdots \wedge X_n \mapsto
\frac{1}{n!}\sum_{\gs \in S_n} (-1)^\gs X_{\gs(1)} \otimes_\ahha
\cdots \otimes_\ahha  X_{\gs(n)} \otimes_\ahha m
$$
defines a morphism of
mixed complexes
$$
\left( M \otimes_\ahha  \textstyle\bigwedge^\bull_\ahha L,0,\partial\right)\to
\left(C^\bull(VL, M),b,B \right)
$$
which induces natural isomorphisms
\begin{equation*}
\begin{split}
H^\bull(V L, M)&\simeq M \otimes_\ahha \textstyle\bigwedge^\bull_\ahha L,\\
HC^\bull(V L, M) &\simeq \ker \pl_\bull \oplus H_{\bull-2}(L,M) 
\oplus H_{\bull-4}(L,M) \oplus \cdots. 
\end{split}
\end{equation*}
\end{theorem}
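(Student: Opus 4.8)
\emph{Well-definedness and setup.} Since $s=t$ equals the canonical injection $A\hookrightarrow VL$, the $(A,A)$-bimodule structure $\lact,\ract$ on $VL$ is symmetric, the induced left and right $A$-actions on $M$ agree (cf.\ \rmref{grimm}), and both the coproduct $\gD$ (generated by the primitives $\gD(X)=X\otimes_\ahha 1+1\otimes_\ahha X$) and the trivial coaction $\gD_\emme(m)=1\otimes_\ahha m$ satisfy their Takeuchi conditions trivially. Hence $\Xi$, which on $M\otimes_k L^{\otimes n}$ is the alternation map, is appropriately $A$-multilinear and annihilates symmetric tensors, so it descends to a well-defined $k$-linear map $M\otimes_\ahha\textstyle\bigwedge^n_\ahha L\to C^n(VL,M)$. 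Moreover, as $L$ is $A$-projective the PBW theorem gives an isomorphism of $A$-modules $VL\cong S_\ahha(L)$, so $\due U {} \ract=VL$ is $A$-flat and Propositions~\ref{daumenschraube} and~\ref{obecni} apply; and by Lemma~\ref{menilmontant} and Theorem~\ref{bufalinadachteich} $(C^\bull(VL,M),\gd_\bull,\gs_\bull,\tau_\bull)$ is a genuine cocyclic $k$-module, so that $b$ and $B$ are defined.

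\emph{$\Xi$ is a morphism of mixed complexes.} One has to check $b\circ\Xi=0$ and $B\circ\Xi=\Xi\circ\partial$. For the first, apply the Hochschild coboundary $b=\sum_{i=0}^{n+1}(-1)^i\gd_i$, with the cofaces from Theorem~\ref{main}, to a single tensor $X_1\otimes_\ahha\cdots\otimes_\ahha X_n\otimes_\ahha m$ with $X_i\in L$: each inner coface $\gd_i$ ($1\le i\le n$) replaces $X_i$ by $X_i\otimes_\ahha 1+1\otimes_\ahha X_i$, while $\gd_0$ inserts a leading $1$ and $\gd_{n+1}$ inserts $m_{(-1)}=1$, so all $2(n+1)$ resulting summands telescope and cancel in pairs; thus $b\circ\Xi=0$ already before antisymmetrising. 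The identity $B\circ\Xi=\Xi\circ\partial$ is the heart of the matter and the \emph{main obstacle}. One computes $B$ on the image of $\Xi$ using the trivial coaction $m_{(-1)}\otimes_\ahha m_{(0)}=1\otimes_\ahha m$, the translation map on generators $X_+\otimes_\Aop X_-=X\otimes_\Aop 1-1\otimes_\Aop X$ extended multiplicatively via \rmref{Sch4} and \rmref{Sch5}, and the explicit cyclic operator $\tau_n$ of Theorem~\ref{main}. Feeding an antisymmetrised tensor into $\tau_n$ and summing over the cyclic group (the norm operator entering $B$) splits, according to the two summands $X\otimes_\Aop 1$ and $-\,1\otimes_\Aop X$ of $X_+\otimes_\Aop X_-$, into exactly the two kinds of terms of $\partial$: the first summand produces the right $L$-action terms $mX_i$ (via $m_{(0)}u^1_+=mX_i$), while the second reinserts $X$ into the tensor, and the non-commutativity of $VL$ together with the alternation converts $Xu-uX$ into the bracket $[X_i,X_j]$; a careful sign count identifies the outcome with $\Xi\bigl(\partial(m\otimes_\ahha X_1\wedge\cdots\wedge X_n)\bigr)$. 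This is the Lie--Rinehart analogue of the original Connes--Moscovici computation, and of \cite[Thm.~3.13]{KowPos:TCTOHA} which is the case $M=A$; the only new feature is carrying the coefficient $m$ along, which is harmless since the coaction is trivial and $M$ commutes appropriately with $A$.

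\emph{The cohomology isomorphism.} By Proposition~\ref{daumenschraube} and the flatness of $VL$ just noted, $H^\bull(VL,M)\simeq\Cotor^\bull_{VL}(A,M)$, with $A$ carrying the coaction induced by the grouplike $1$ and $M$ the trivial coaction. When $L$ is $A$-projective, the PBW theorem for $VL$ together with Koszul duality for the symmetric algebra identify this $\Cotor$ with $M\otimes_\ahha\textstyle\bigwedge^\bull_\ahha L$ (for $M=A$ this is precisely \cite[Thm.~3.13]{KowPos:TCTOHA}; the general flat $M$ follows either by repeating that argument with $M$ carried along or by a universal-coefficient argument, all higher $\Tor$/$\Cotor$ vanishing by the projectivity/flatness hypotheses), and the identification is realised by the antisymmetrisation $\bar\Xi$. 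Hence $\Xi$ induces an isomorphism $M\otimes_\ahha\textstyle\bigwedge^\bull_\ahha L\xrightarrow{\;\simeq\;}H^\bull(VL,M)$.

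\emph{The cyclic isomorphism.} A morphism of mixed complexes that is a quasi-isomorphism on the underlying Hochschild ($b$-)complexes induces an isomorphism on cyclic cohomology, by Connes' $SBI$ long exact sequence and the five lemma (cf.~\cite{Lod:CH}); by the previous step $\Xi$ has this property, so $HC^\bull(VL,M)\simeq HC^\bull\bigl(M\otimes_\ahha\textstyle\bigwedge^\bull_\ahha L,0,\partial\bigr)$. To finish, compute the right-hand side directly: the $(b,B)$-bicomplex of the mixed complex $(M\otimes_\ahha\bigwedge^\bull_\ahha L,0,\partial)$ has vanishing $b$-differential, so in total degree $n$ its cohomology splits column by column as $\ker\pl_n\oplus H_{n-2}(L,M)\oplus H_{n-4}(L,M)\oplus\cdots$: the zeroth column contributes $\ker\pl_n$ since nothing maps into it, and each subsequent column contributes the homology of $(M\otimes_\ahha\bigwedge^\bull_\ahha L,\partial)$ in the indicated degree, which by the canonical Rinehart complex recalled above (valid as $L$ is $A$-projective) equals $H_{n-2j}(L,M)=\Tor^{VL}_{n-2j}(M,A)$ in the sense of \rmref{chezjeanette}. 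Combining the two displayed isomorphisms gives the assertion, naturally in $M$.
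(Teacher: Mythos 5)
Your overall architecture is sound and largely parallels the paper's: well-definedness, $b\circ\Xi=0$ by the telescoping of the cofaces on primitives (correct, and the paper essentially delegates this plus the first isomorphism to the $M=A$ case of \cite{KowPos:TCTOHA} together with $A$-flatness of $M$), the standard $SBI$/five-lemma passage from a quasi-isomorphism of mixed complexes to an isomorphism on $HC$, and the column-by-column degeneration of the $(0,\partial)$-mixed complex giving $\ker\pl_\bull\oplus H_{\bull-2}(L,M)\oplus\cdots$. These parts are fine.

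The genuine gap is the identity $B\circ\Xi=\Xi\circ\partial$, which you yourself flag as ``the heart of the matter'' and then do not prove: you describe qualitatively how the two summands of $X_+\otimes_\Aop X_-=X\otimes_\Aop 1-1\otimes_\Aop X$ ought to produce the $mX_i$-terms and the $[X_i,X_j]$-terms respectively and appeal to ``a careful sign count,'' but no computation is carried out. This is not a step that would fail — the claim is true — but as written the proof is missing precisely where the work lies. Moreover, the route you propose (applying the norm operator to the cocyclic operator $\tau_n$ of Theorem~\ref{main} directly on $C^\bull(VL,M)$) requires expanding the translation map $u_+\otimes_\Aopp u_-$ of arbitrary products $aX_1\cdots X_p\in VL$ inside the iterated coproduct, which is hard to organise. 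The paper avoids this by working on the auxiliary para-cocyclic module $\boehmcomplex^\bull(VL,M)$ of Lemma~\ref{nepaseffacer}, where $\tau'_n$ is a plain cyclic rotation (the coaction being trivial), so that $\tilde B=N\gs_{-1}(1-\gl)$ has a completely explicit closed form; it then transports the result along $\phi\circ\pi$ using the intertwining $B\circ\phi\circ\pi=\phi\circ\pi\circ\tilde B$ established in Section~\ref{effacersvp}, and only at the very last step does the translation map appear, in the single factor ${X_{\gs(1)}}_\pm$. If you want to complete your argument, either carry out your direct computation in full (including the signs coming from $\gl=(-1)^n\tau_n$), or adopt the paper's detour through $\boehmcomplex^\bull(VL,M)$, which reduces the verification to one application of $X_+\otimes_\Aop X_-=X\otimes_\Aop 1-1\otimes_\Aop X$ per summand.
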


\begin{proof}
The first part of the theorem and the first isomorphism follow immediately by the 
form of the cosimplicial operators in \rmref{anightinpyongyang} for a trivial coaction, combined with 
the analogous result for  $M=A$ from \cite{KowPos:TCTOHA} 
and the flatness assumption on $M$.

To prove the second isomorphism, 
we need to show that $\Xi$ intertwines
the horizontal differential $B$ with 
$\partial$. 
This will be done by explicitly applying
 the coinvariants functor and the
 results in Section \ref{effacersvp}.  
Let $\tilde B: B^\bull(V L, M) \to
 B^{\bull-1}(V L, M)$ 
denote the 
horizontal differentials of the mixed
 complex associated to the cocyclic
 module from
Lemma
 \ref{nepaseffacer}.
Hence
$\tilde B = N \gs_{-1}(1 - \gl)$, where $\gl :=
(-1)^n \tau_n$, $N := \sum^n_{i=0}
 \gl^i$, and 
$\gs_{-1} := \gs_{n-1}
\tau_n$. Explicitly, we obtain
\begin{equation*}
\begin{split}
		  \tilde B(&u_0\otimes_\ahha\cdots\otimes_\ahha u_n \otimes_\ahha m) \\
&=\sum_{i=0}^n\Big((-1)^{ni}\eps(u_0)u_{i+1}\otimes_\ahha\cdots\otimes_\ahha u_n\otimes_\ahha u_1\otimes_\ahha\cdots\otimes_\ahha u_{i-1} \otimes_\ahha m \\
&\hspace*{1.5cm} -(-1)^{n(i-1)}\eps(u_n)u_{i+1}\otimes_\ahha\cdots\otimes_\ahha u_{n-1}\otimes_\ahha u_0\otimes_\ahha\cdots\otimes_\ahha u_{i-1} \otimes_\ahha m \Big).
\end{split}
\end{equation*}
Note that $B^n(V L,M) \cong C^{n+1}(V L,M)$ as $(A,A)$-bimodules in this example.
From our general considerations in
 Section \ref{effacersvp}, we have  
$B \circ \phi \circ \pi =
\phi \circ \pi \circ \tilde B$, where $\pi$ is the canonical projection $B^\bull(U,M) \to U^{\otimes_\ahha \bull+1} \otimes_\Uop M$ and
$\phi: U^{\otimes_\ahha \bull+1} \otimes_\Uop M \to
C^n(V L,M)$ is the isomorphism \rmref{doppellaeufer}. 
Using its right inverse mentioned in the proof of Proposition \ref{excellentesoiree}, 
it is seen that
$$
\Xi(m \otimes_\ahha X_1 \wedge \cdots \wedge X_n) =
\phi \Big( \pi
\big(\textstyle\frac{1}{n!}\sum_{\gs \in S_n} (-1)^\gs 1
\otimes_\ahha X_{\gs(1)} \otimes_\ahha
\cdots \otimes_\ahha  X_{\gs(n)} \otimes_\ahha m \big)\Big).
$$ 
Hence, because $L \subset \ker \eps$ 
we can compute 
\begin{equation*}
\begin{split}
B &\big(\Xi(m \otimes_\ahha X_1 \wedge \cdots \wedge X_n)\big) = \\
&= 
\phi \Big( \pi \big(\tilde B (\textstyle\frac{1}{n!}\sum_{\gs \in S_n} (-1)^\gs 1
\otimes_\ahha X_{\gs(1)} \otimes_\ahha \cdots \otimes_\ahha X_{\gs(n)}\otimes_\ahha m ) \big) \Big) \\
&= 
\phi \big(\pi ( \textstyle\frac{1}{(n-1)!} 
\sum_{\gs \in S_n} (-1)^\gs X_{\gs(1)} \otimes_\ahha
\cdots \otimes_\ahha  X_{\gs(n)} \otimes_\ahha m )\big) \\
&= 
{\textstyle\frac{1}{(n-1)!} \sum_{\gs \in S_n}} (-1)^\gs
{X_{\gs(1)}}_- \cdot \big(X_{\gs(2)} \otimes_\ahha
\cdots \otimes_\ahha  X_{\gs(n)} \big)\otimes_\ahha m{X_{\gs(1)}}_+, \\
&= 
\textstyle \frac{1}{(n-1)!} \sum_{\gs \in S_n} (-1)^\gs 
X_{\gs(2)} \otimes_\ahha \cdots \otimes_\ahha  X_{\gs(n)} \otimes_\ahha mX_{\gs(1)}\\
&\quad - \textstyle \frac{1}{(n-1)!} 
\sum^n_{i=1}\sum_{\gs \in S_n} (-1)^\gs X_{\gs(2)} \otimes_\ahha \cdots \otimes_\ahha X_{\gs(1)}X_{\gs(i)} \otimes_\ahha 
\cdots \otimes_\ahha  X_{\gs(n)} \otimes_\ahha m\\
&= \Xi \big(\textstyle \sum^n_{i=1} (-1)^{i + 1} mX_i \otimes_\ahha X_1 \wedge
\cdots \wedge \hat{X}_i \wedge \cdots \wedge X_n  \\
&\qquad\qquad + \textstyle \sum_{i<j} (-1)^{i+j} m \otimes_\ahha [X_i, X_j] \wedge X_1
\wedge \cdots\wedge \hat{X}_i \wedge \cdots \wedge \hat{X}_j \wedge \cdots\wedge X_n
\big) \\
&= \Xi \big(\partial(m \otimes_\ahha X_1 \wedge \cdots \wedge X_n)\big),
\end{split}
\end{equation*}
where $\cdot$ denotes the diagonal action via the coproduct.
This completes the proof.
\end{proof}

\begin{rem}
Note that combining the preceding
 theorem with Proposition \ref{obecni}
 as well as \rmref{chezjeanette} relates
the Hopf-cyclic  
cohomology of $VL$ with the Hopf algebroid homology,
 that is, the simplicial theory of the dual Hopf-cyclic homology:
\begin{align*}
		  HC^\bull(V L, M) 
&\simeq \ker \pl_\bull \oplus H_{\bull-2}(L,M) 
\oplus H_{\bull-4}(L,M) \oplus \cdots\\
&\simeq \ker \pl_\bull \oplus \Tor^{VL}_{\bull-2}(M,A) 
\oplus \Tor^{VL}_{\bull-4}(L,M) \oplus \cdots\\
&\simeq \ker \pl_\bull \oplus H_{\bull-2}(VL,M) 
\oplus H_{\bull-4}(VL,M) \oplus \cdots.
\end{align*}
\end{rem}

\subsection{Twisted cyclic homology}
\label{beispielabsch}
Recall from \cite{Schau:DADOQGHA} that 
$U=\Ae$ is for any $k$-algebra $A$ 
a left Hopf algebroid over 
$A$ with structure maps
$$
		  s(a) := a \otimes_k 1,\quad
		   t(b) := 1 \otimes_k b,\quad 
		  \gD(a \otimes_k b) := (a
		  \otimes_k 1) 
		  \otimes_A (1 \otimes_k b),\quad
		  \eps(a \otimes_k b) := ab.
$$
The inverse of the Hopf-Galois map is given by
$$
		  (a \otimes_k b)_+ \otimes_\Aop 
		  (a \otimes_k b)_- := (a \otimes_k 1)
		  \otimes_\Aop (b \otimes_k 1).
$$

Any algebra endomorphism 
$\gs: A \to A$ defines a right
$\Ae$-module $A_\gs$ which is $A$ as
$k$-module with the right action 
$$
		  x(a \otimes_k b) := 
		  bx\gs(a), \qquad a, x \in A, b \in \Aop.
$$
Define furthermore a left $\Ae$-comodule
structure on $A_\gs$ by 
$$
		  A_\gs \to \Ae \otimes_A A_\gs,
		  \quad 
		  x \mapsto (x \otimes_k 1) \otimes_A 1,
$$
which reduces to the map 
$A_\gs \to \Ae, \ x \mapsto x \otimes_k 1$.
With this $\Ae$-action and
$\Ae$-coaction on $A_\gs$ we have $bx =
xt(b)$, but $xa$ is different from
$xs(a)$ unless $\gs = \mathrm{id}_A$. 
Under the isomorphism 
$C_\bull(\Ae, A_\gs) = 
A_\gs \otimes_\Aopp \Ae^{\otimes_\Aopp
n} 
\simeq A_\gs \otimes_k A^{\otimes_k n}$ 
given by
$$
		  x \otimes_\Aopp (a_1 \otimes_k
		  b_1) \otimes_\Aopp \cdots
		  \otimes_\Aopp (a_n \otimes_k
		  b_n) \mapsto b_n \cdots b_1 x
		  \otimes_k a_1 \otimes_k \cdots
		  \otimes_k a_n,   
$$
the para-cyclic operators 
\rmref{adualnightinpyongyang} become
\begin{equation*}
\!\!\!
\begin{array}{rcll}
d_i(x \otimes_k y)  &\!\!\!\!\! =& \!\!\!\!\!
\left\{ \!\!\!
\begin{array}{l}
a_n x \otimes_k a_1  \otimes_k   \cdots   \otimes_k   a_{n-1}
\\
x \otimes_k \cdots \otimes_k  a_{n-i} a_{n-i+1}
 \otimes_k  \cdots 
\\
x\gs(a_1) \otimes_k a_2  \otimes_k   \cdots    \otimes_k  
a_n 
\end{array}\right.  & \!\!\!\!\!\!\!\!\!\!\!\!   \begin{array}{l} \mbox{if} \ i \!=\! 0, \\ \mbox{if} \ 1
\!  \leq \! i \!\leq\! n-1, \\ \mbox{if} \ i \! = \! n, \end{array} \\
s_i(x \otimes_k y) &\!\!\!\!\! =&\!\!\!\!\!  \left\{ \!\!\!
\begin{array}{l} x  \otimes_k   a_1  
\otimes_k   \cdots   \otimes_k
 a_n  \otimes_k 
		  1
\\
x \otimes_k \cdots \otimes_k   a_{n-i} 
\otimes_k   1  
\otimes_k   a_{n-i+1}  \otimes_k
 \cdots  
\\
x \otimes_k 1
\otimes_k a_1  \otimes_k   \cdots    \otimes_k  a_n 
\end{array}\right.   & \!\!\!\!\!\!\!\!\!\!\!  \begin{array}{l} 
\mbox{if} \ i\!=\!0, \\ 
\mbox{if} \ 1 \!\leq\! i \!\leq\! n-1, \\  \mbox{if} \ i\! = \!n, \end{array} \\
t_n(x \otimes_k y) 
&\!\!\!\!\!=&\!\!\!\!\! 
\gs(a_1) \otimes_k a_2 \otimes_k \cdots
\otimes_k a_n \otimes_k x, 
& \\
\end{array}
\end{equation*}
where we abbreviate $y:=a_1
\otimes_k \cdots \otimes_k a_n$.
In particular, one has
$$
		  t_n^{n+1}=\gs \otimes_k \cdots
		  \otimes_k \gs,
$$
so $C_\bull(\Ae,A_\gs)$ is
cyclic
if and only if $\gs= \id$ (in
which case $A_\gs$ is an SaYD module).

However, there are many situations in
which the canonical projection 
from $C_\bull(\Ae,A_\gs)$ onto its
associated cyclic $k$-module 
$C_\bull(\Ae,A_\gs)/\mathrm{im}(\mathrm{id}-
t_\bull^{\bull+1})$ is a
quasi-isomorphism of the underlying
simplicial $k$-modules, see
e.g.~\cite[Prop.\ 2.1]{HadKra:THOQ}, which implies:
\begin{thm}
If $k$ is a field and $\gs$ is a
diagonalisable automorphism of $A$,
then we have
$$
		  H_\bull(\Ae,A_\gs) \simeq H_\bull(A,A_\gs).
$$
\end{thm}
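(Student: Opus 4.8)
The plan is to realise $H_\bull(\Ae,A_\gs)$ and $H_\bull(A,A_\gs)$ as the homologies of two chain complexes joined by a canonical surjection, and then to prove that surjection is a quasi-isomorphism. By construction $H_\bull(\Ae,A_\gs)$ is the homology of the chain complex $(C_\bull(\Ae,A_\gs),b)$ underlying the para-cyclic $k$-module, i.e.\ of $\bigl(A_\gs\otimes_k A^{\otimes_k\bull},b\bigr)$ with the faces displayed above, whereas $H_\bull(A,A_\gs)$ is by definition the homology of the chain complex underlying the associated cyclic module $\bar C_\bull:=C_\bull(\Ae,A_\gs)/\im(\id-t_\bull^{\bull+1})$, the twisted Hochschild homology in the sense of \cite{KusMurTus:DCOQGATCC}. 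Since $t_\bull^{\bull+1}$ commutes with all faces and degeneracies, $K_\bull:=\im(\id-t_\bull^{\bull+1})$ is a subcomplex of $C_\bull(\Ae,A_\gs)$; the first step is therefore to reduce the theorem, via the long exact homology sequence of $0\to K_\bull\to C_\bull(\Ae,A_\gs)\to\bar C_\bull\to 0$, to the statement that $K_\bull$ is acyclic.

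Second, I would decompose the complex using the hypotheses. Because $k$ is a field and $\gs$ is diagonalisable, $A=\bigoplus_\lambda A_\lambda$ with $A_\lambda=\{a\in A\mid\gs(a)=\lambda a\}$; and since $\gs$ is a unital $k$-algebra automorphism, $1\in A_1$, $A_\lambda A_{\lambda'}\subseteq A_{\lambda\lambda'}$ and $\gs(A_\lambda)=A_\lambda$. Collecting the summands of $A_\gs\otimes_k A^{\otimes_k n}$ by the product $\mu=\lambda_0\cdots\lambda_n$ of the weights of their tensor slots gives $C_n(\Ae,A_\gs)=\bigoplus_{\mu\in k^\times}C_n^\mu$, and an inspection of the structure maps in \rmref{adualnightinpyongyang} for $U=\Ae$ — each of which only multiplies elements of $A$, inserts the unit, or applies $\gs$ — shows that all faces, degeneracies and the cyclic operator respect this decomposition, so that $C_\bull(\Ae,A_\gs)=\bigoplus_\mu C_\bull^\mu$ as complexes of $k$-modules. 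On the summand $C_\bull^\mu$ the operator $t_\bull^{\bull+1}=\gs\otimes_k\cdots\otimes_k\gs$ acts as multiplication by the scalar $\mu$, so $\id-t_\bull^{\bull+1}$ restricts to $(1-\mu)\cdot\id$ there: an isomorphism when $\mu\neq1$ and zero when $\mu=1$. Hence $K_\bull=\bigoplus_{\mu\neq1}C_\bull^\mu$ and $\bar C_\bull\cong C_\bull^1$, and the theorem comes down to the acyclicity of $C_\bull^\mu$ for every $\mu\in k^\times$ with $\mu\neq1$.

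This last step is exactly \cite[Prop.\ 2.1]{HadKra:THOQ}, and I expect it to be the main obstacle; the two preceding steps are essentially bookkeeping. The way I would attack it is to construct an explicit contracting homotopy on $C_\bull^\mu$. The \emph{extra degeneracy} $s_{-1}:=t_{\bull+1}\circ s_\bull$ preserves the weight grading (as $1\in A_1$) and, by the para-cyclic relations, satisfies $d_0\circ s_{-1}=\id$, $d_i\circ s_{-1}=s_{-1}\circ d_{i-1}$ for $1\le i\le n$ and $d_{n+1}\circ s_{-1}=t_n$; in particular it contracts the truncated complex obtained from $C_\bull^\mu$ by omitting the last face. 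Since $1-t$ is invertible on $C_\bull^\mu$ — its inverse being $\tfrac{1}{1-\mu}$ times the norm operator $\sum_{j}t^{\,j}$ — the last face $d_n=d_0\circ t_n$ can then be reinstated by a perturbation argument, producing a nullhomotopy of $\id_{C_\bull^\mu}$. Granting this acyclicity, $K_\bull$ is acyclic, the projection $C_\bull(\Ae,A_\gs)\to\bar C_\bull$ is a quasi-isomorphism of the underlying simplicial $k$-modules, and therefore $H_\bull(\Ae,A_\gs)\cong H_\bull(\bar C_\bull)=H_\bull(A,A_\gs)$.
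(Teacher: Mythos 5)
Your argument is correct and follows the same route as the paper: the theorem is reduced to the statement that the projection onto the associated cyclic module is a quasi-isomorphism of simplicial $k$-modules, which the paper simply cites as \cite[Prop.~2.1]{HadKra:THOQ}. Your eigenspace decomposition of $A_\gs\otimes_k A^{\otimes_k\bull}$ by total weight, the observation that $\id-t_\bull^{\bull+1}$ acts as $1-\mu$ on the weight-$\mu$ summand, and the contraction of the summands with $\mu\neq 1$ via the extra degeneracy and the norm operator constitute precisely the proof of that cited proposition, so nothing further is needed.
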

Here the right hand side denotes the
Hochschild homology of $A$ with
coefficients in the $(A,A)$-bimodule $A_\gs$.
The resulting cyclic homology 
$HC^\gs_\bull(A):=HC_\bull(\Ae,A_\gs)$ has been
first considered in \cite{KusMurTus:DCOQGATCC} under
the name $\gs$-twisted cyclic homology
and has served as yet another 
guiding example of generalised cyclic
homology theories. It can be also
expressed as the
Hopf-cyclic homology of the
$k \mathbb{Z}$-module algebra $A$
(where $k \mathbb{Z}$ acts via
$\gs$), but the above presentation seems
more natural and stresses the way it
originates as a deformation of $HC_\bull(A)$.    
We therefore consider it an important
example that motivates both the
generalisation of Hopf-cyclic
(co)homology from Hopf algebras to Hopf
algebroids, and also the necessity to 
consider
coefficients beyond SaYD modules, and the above shows how to extend the construction of \cite{KusMurTus:DCOQGATCC} to arbitrary $(A,A)$-bimodules assuming the existence of an $\Ae$-coaction on the coefficients.

\providecommand{\bysame}{\leavevmode\hbox to3em{\hrulefill}\thinspace}
\providecommand{\MR}{\relax\ifhmode\unskip\space\fi MR }
\providecommand{\MRhref}[2]{%
  \href{http://www.ams.org/mathscinet-getitem?mr=#1}{#2}
}
\providecommand{\href}[2]{#2}

\end{document}